\documentclass[a4paper,12pt,reqno]{amsart}
\usepackage{latexsym}
\usepackage{amssymb,amsfonts,amsmath,mathrsfs,graphicx,multirow}
\usepackage{caption}
\addtolength{\textwidth}{3 truecm}
\addtolength{\textheight}{1 truecm}
\setlength{\voffset}{-0.6 truecm}
\setlength{\hoffset}{-1.3 truecm}

\newtheorem{theorem}{Theorem}
\newtheorem{proposition}{Proposition}
\newtheorem{lemma}{Lemma}
\newtheorem{conjecture}{Conjecture}
\newtheorem{remark}{Remark}
\newtheorem{corollary}{Corollary}
\theoremstyle{definition}

\begin{document}

\title{On the derivatives of Hardy's function $Z(t)$}
\author{Hung M. Bui and R. R. Hall}
\subjclass[2010]{11M06, 11M26.}
\keywords{Riemann zeta-function, Hardy's $Z$-function, moments.}
\address{Department of Mathematics, University of Manchester, Manchester M13 9PL, UK}
\email{hung.bui@manchester.ac.uk}
\address{Department of Mathematics, University of York, York YO10 5DD, UK}
\email{richardroxbyhall@gmail.com}

\begin{abstract}
Let $Z^{(k)}(t)$ be the $k$-th derivative of Hardy's $Z$-function. The numerics seem to suggest that if $k$ and $\ell$ have the same parity, then the zeros of $Z^{(k)}(t)$ and $Z^{(\ell)}(t)$ come in pairs which are very close to each other. That is to say that $Z^{(k)}(t)Z^{(\ell)}(t)$ has constant sign for the majority, if not almost all, of values $t$. In this paper we show that this is true a positive proportion of times. We also study the sign of the product of four derivatives of Hardy's function, $Z^{(k)}(t)Z^{(\ell)}(t)Z^{(m)}(t)Z^{(n)}(t)$.
\end{abstract}

\allowdisplaybreaks

\maketitle

\section{Introduction}

We are interested in the sign of each of the functions
\begin{align*}
& (\text{A})\qquad Z^{(k)}(t)Z^{(\ell)}(t),\\
& (\text{B})\qquad Z^{(k)}(t)Z^{(\ell)}(t)Z^{(m)}(t)Z^{(n)}(t),
\end{align*}
in which $Z^{(k)}(t)$ is the $k$-th derivative of Hardy's function
\begin{align*}
Z(t)&:=e^{i\theta(t)}\zeta(\tfrac12+it)\\
&=\bigg(\pi^{-it}\frac{\Gamma(\frac14+\frac{it}{2})}{\Gamma(\frac14-\frac{it}{2})}\bigg)^{1/2}\zeta(\tfrac12+it).
\end{align*} 
This is related to the work of Gonek and Ivi\'c [\textbf{\ref{GI}}], where the sign of $Z(t)$ is studied.

For case (A), if $k+\ell =2s+1$ is odd, then we have that
\[
\int_0^TZ^{(k)}(t)Z^{(\ell)}(t)dt\ll_{k,\ell}T^{1/3},
\]
by employing the bound $Z^{(k)}(t)\ll_kT^{1/6}$ and integration by parts. When $k+\ell=2s$ is even and $|k-\ell|=2d$ we have the mean values
\begin{equation}\label{meanZkZl}
\int_{0}^TZ^{(k)}(t)Z^{(\ell)}(t)dt=\frac{(-1)^d}{4^s(2s+1)}TQ_{2s+1}\Big(\log\frac{T}{2\pi}\Big)+O\big(T^{3/4}(\log T)^{2s+1/2}\big),
\end{equation}
where $Q_{2s+1}(x)$ is a monic polynomial of degree $2s+1$. This follows from [\textbf{\ref{H}}; Theorem 3] by integration by parts. In the simplest case $k=0$ and $\ell=2$, formula \eqref{meanZkZl} shows that there exist many values of $t$ such that $Z(t)Z''(t)<0$. We draw the reader's attention to Figures 1--2 below which suggest that in fact the majority, if not almost all, of values $t$ have this property. 

\vspace{0.3cm}

\begin{center}
\includegraphics[width=0.85\textwidth]{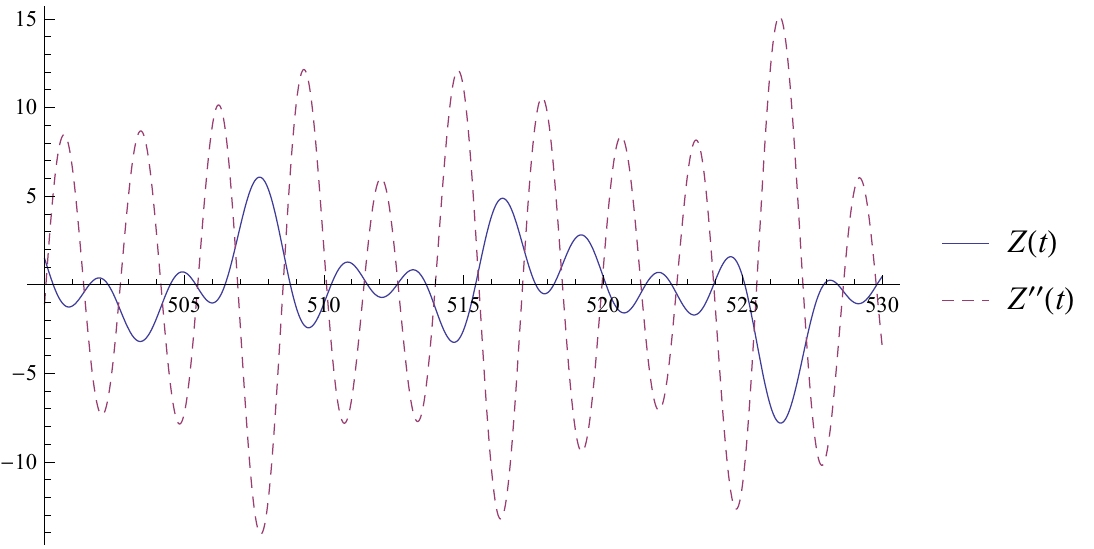}

\vspace{0.2cm}
\textsc{Figure 1}. $Z(t)$ and $Z''(t)$ for $t\in[500,530]$.
\end{center}
\vspace{0.3cm}

\begin{center}
\includegraphics[width=0.85\textwidth]{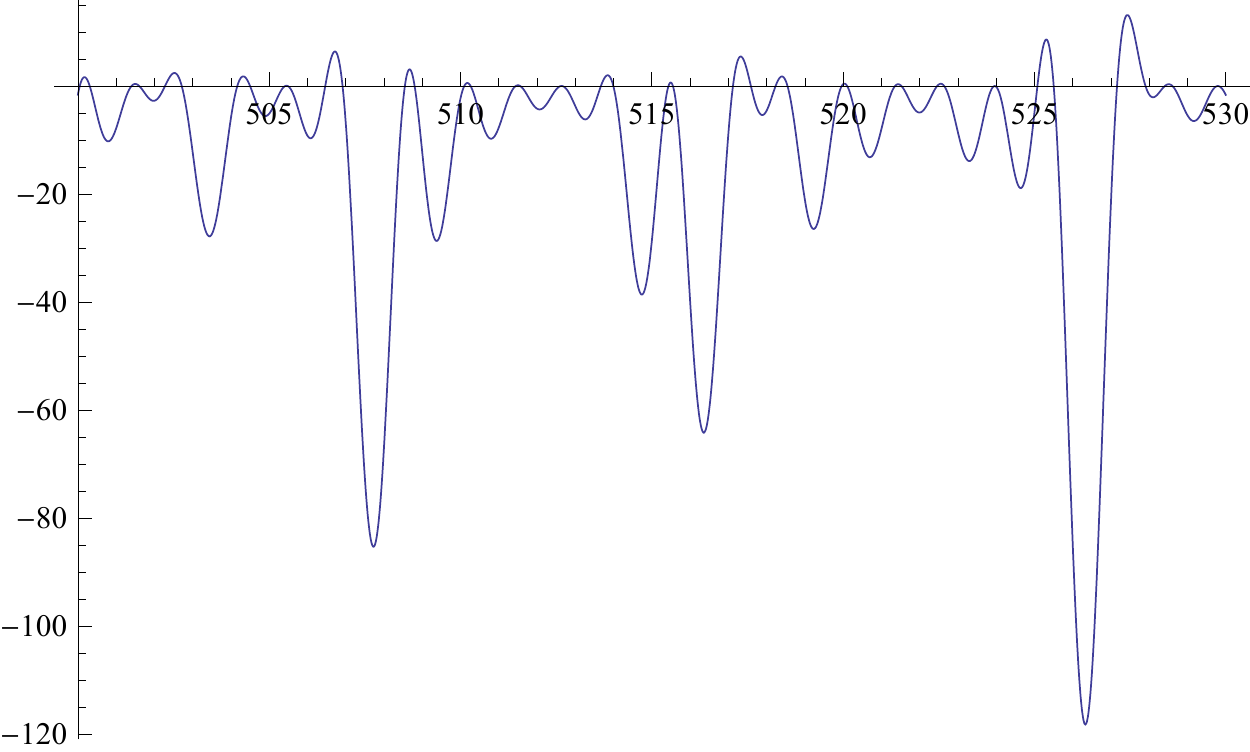}

\vspace{0.2cm}
\textsc{Figure 2}. $Z(t)Z''(t)$ for $t\in[500,530]$.
\end{center}
\vspace{0.3cm}

Out first theorem shows that this is true more than $12\%$ of the times.

\begin{theorem}\label{thm1}
We have
\[
\emph{meas}\big\{t\in[T,2T]:Z(t)Z''(t)< 0\big\}\geq \Big(\frac{3}{25}+o(1)\Big)T.
\]
\end{theorem}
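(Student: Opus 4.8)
The plan is to convert the strong negative correlation between $Z$ and $Z''$ into a lower bound for the measure of the set where they differ in sign. First I would record the three relevant mean values over $[T,2T]$. Writing $L=\log\frac{T}{2\pi}$, formula \eqref{meanZkZl} together with integration by parts yields the leading asymptotics
\[
\int_T^{2T}Z(t)^2\,dt\sim TL,\qquad \int_T^{2T}Z(t)Z''(t)\,dt\sim-\tfrac{1}{12}TL^3,\qquad \int_T^{2T}Z''(t)^2\,dt\sim\tfrac{1}{80}TL^5,
\]
the first being the classical second moment and the others the cases $s=1$, $s=2$ of \eqref{meanZkZl}. The arithmetic fact that drives everything is that these constants satisfy $\big(\int_T^{2T}ZZ''\big)^2\sim\frac{5}{9}\int_T^{2T}Z^2\cdot\int_T^{2T}(Z'')^2$; equivalently the correlation coefficient of $Z$ and $Z''$ tends to $-\sqrt5/3$. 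This already shows $ZZ''<0$ on a set of positive measure, and the point is to make it quantitative.

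The naive move—Cauchy--Schwarz in the shape $\mathrm{meas}\{ZZ''<0\}\geq\big(\int ZZ''\big)^2/\int Z^2(Z'')^2$—is worthless here, because $\zeta(\tfrac12+it)$ has large values: the fourth moment $\int_T^{2T}Z^2(Z'')^2$ is inflated to order $TL^8$ instead of the order $TL^6$ that a positive proportion would require, and the estimate collapses to $o(T)$. I would therefore never form the product, and instead treat $Z$ and $Z''$ as separate linear quantities. Replacing $Z''$ by the orthogonalised combination $h:=Z''+\frac{L^2}{12}Z$, which satisfies $\int_T^{2T}Zh\,dt=0$ and $\int_T^{2T}h^2\,dt\sim\frac{1}{180}TL^5$, one has the identity $ZZ''=Zh-\frac{L^2}{12}Z^2$, so that the bad set $\{ZZ''\geq0\}$ becomes, to leading order, the set $\big\{\,Zh\geq\frac{L^2}{12}Z^2\,\big\}$, a statement about two \emph{uncorrelated} functions of comparable normalised size. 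The heart of the argument is then a moment inequality estimating the measure of this set from the second moments above together with the fourth moments $\int Z^4$, $\int Z^2h^2$, $\int h^4$; these are exactly the mean values of products of four derivatives handled for case (B), expanded out in $Z$ and $Z''$. Keeping all factors linear and balancing the two functions through a free scaling parameter, one arranges the inequality so that the powers of $L$ cancel and the optimisation over the parameter delivers a bound $\mathrm{meas}\{ZZ''<0\}\geq(\alpha+o(1))T$ with $\alpha$ the explicit rational number $\frac{3}{25}$.

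The main obstacle, and the reason the bound falls well short of what the figures suggest, is precisely the large values of $\zeta$ on the critical line. Any step that routes through the product $ZZ''$, or through a high moment of a single derivative, loses a clean power of $L$ and produces only $o(T)$; indeed the normalised fourth moment $\int Z^4/(\int Z^2)^2\cdot T$ already grows like $L^2$, so the relevant quantities are genuinely heavy-tailed and no crude Chebyshev- or Paley--Zygmund-type bound survives. The delicate work is to organise the whole estimate around the orthogonality $\int_T^{2T}Zh\,dt=0$ and the exact constants $\frac1{12}$, $\frac1{80}$ (equivalently $\rho^2=\tfrac59$) so that these logarithmic factors cancel identically, and to supply the fourth-moment asymptotics with a power-saving error term so that the displayed $o(1)$ is legitimate.
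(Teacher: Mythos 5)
There is a genuine gap here, and it is the central idea of the paper's proof: the mollifier. You correctly diagnose why the naive Cauchy--Schwarz bound $\mathrm{meas}\{ZZ''<0\}\geq\big(\int ZZ''\big)^2/\int Z^2(Z'')^2$ fails --- the unmollified fourth moment is of order $T(\log T)^8$ rather than $T(\log T)^6$ --- but the remedy you propose does not repair this. Orthogonalising $Z''$ against $Z$ and rewriting the bad set as $\{Zh\geq\tfrac{L^2}{12}Z^2\}$ changes nothing about the heavy tails: every fourth moment you would need ($\int Z^4$, $\int Z^2h^2$, $\int h^4$) carries the same parasitic factor $L^2$ relative to the square of the corresponding second moment, and $L^2$-orthogonality of $Z$ and $h$ gives no information about the measure of a set defined by a pointwise inequality without precisely such higher-moment control. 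Your own observation that ``no crude Chebyshev- or Paley--Zygmund-type bound survives'' is correct, but the subsequent claim that the logarithms ``cancel identically'' once one organises the estimate around $\int Zh=0$ is asserted, not argued, and I do not see any mechanism by which it could be true: if you try to run the argument (e.g.\ $\int_{\{ZZ''\geq0\}}Zh\geq\tfrac{L^2}{12}\int_{\{ZZ''\geq0\}}Z^2$ followed by Cauchy--Schwarz on the complementary set) the resulting bounds are weaker than trivial. The constant $3/25$ cannot be extracted from the correlation coefficient $-\sqrt{5}/3$ and the constants $\tfrac1{12}$, $\tfrac1{80}$ alone.

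What the paper actually does is insert the damping factor $|M(\tfrac12+it)|^2$, with $M$ the standard mollifier of length $y=T^{\vartheta}$, \emph{before} applying Cauchy--Schwarz: one bounds $\mathrm{meas}(S_{0,2}^-)\geq\mathcal{S}_{0,2}^2/\mathcal{T}_{0,2}$ with $\mathcal{S}_{0,2}=\int_T^{2T}ZZ''|M|^2$ and $\mathcal{T}_{0,2}=\int_T^{2T}Z^2(Z'')^2|M|^4$. The mollifier suppresses the large values of $\zeta$ so that $\mathcal{T}_{0,2}\asymp T(\log T)^4$, i.e.\ of the same order as $\mathcal{S}_{0,2}^2/T$, which is exactly what restores a positive proportion. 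The price is that one must prove an asymptotic for the mollified (indeed shifted, twisted) fourth moment of $\zeta$ --- this is Proposition \ref{mollifiedfourth}, resting on the Bettin--Bui--Li--Radziwi\l\l{} twisted fourth moment, and it is the technical heart of the paper. The numerical value $\tfrac{3}{25}$ then comes from evaluating $\mathcal{S}_{0,2}^2/\mathcal{T}_{0,2}$ with the choice $P(x)=x^2$ at the limiting mollifier length $\vartheta=1/8-\varepsilon$ permitted by that fourth-moment theorem; it has no closed-form interpretation in terms of the unmollified correlation constants. Your second-moment asymptotics and the computation $\rho^2=\tfrac59$ are correct, but without mollification they cannot yield more than $\gg T/(\log T)^2$ for the measure in question.
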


In general, we deduce from \eqref{meanZkZl} that the integrand has sign $(-1)^d$ for many values of $t$. In fact, in view of Figures 3--4 below, this seems to be the case for almost all $t$ as we increase $k$ and $\ell$. That is to say if $k$ and $\ell$ have the same parity, then the zeros of $Z^{(k)}(t)$ and $Z^{(\ell)}(t)$ come in pairs which are very close to each other. 

\vspace{0.3cm}

\begin{center}
\includegraphics{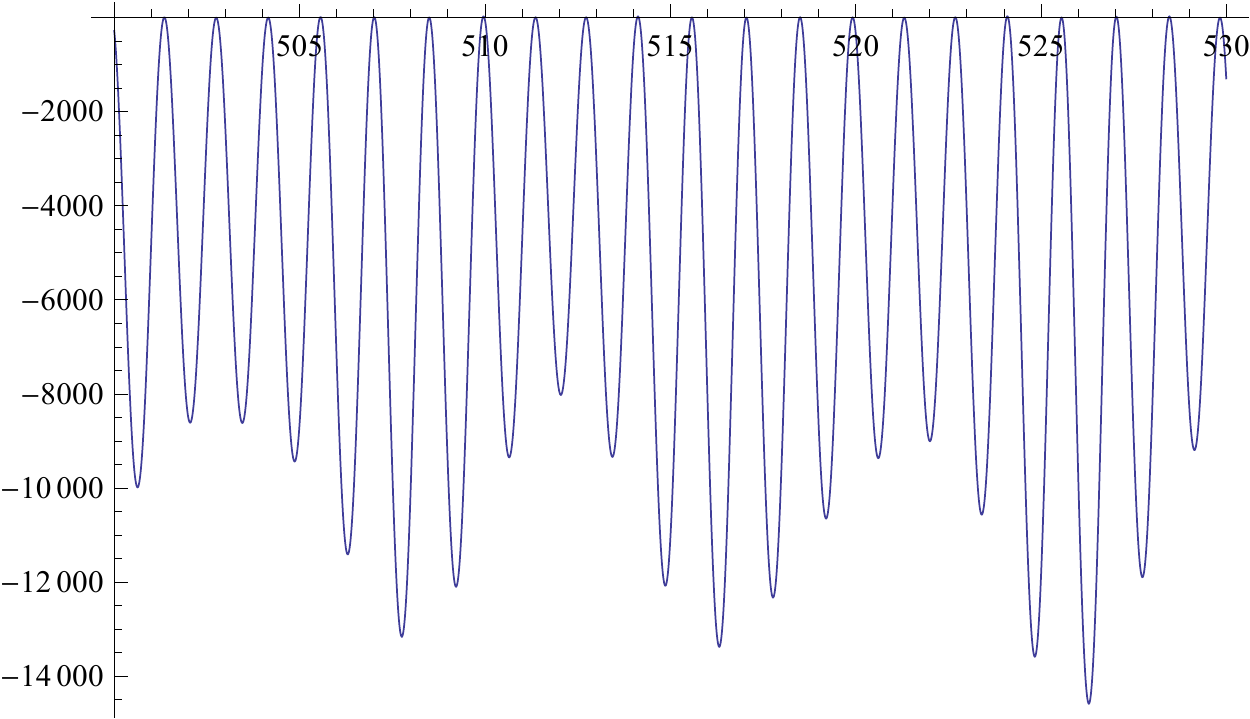}

\vspace{0.2cm}
\textsc{Figure 3}. $Z^{(4)}(t)Z^{(6)}(t)$ for $t\in[500,530]$.
\end{center}
\vspace{0.3cm}

\begin{center}
\includegraphics{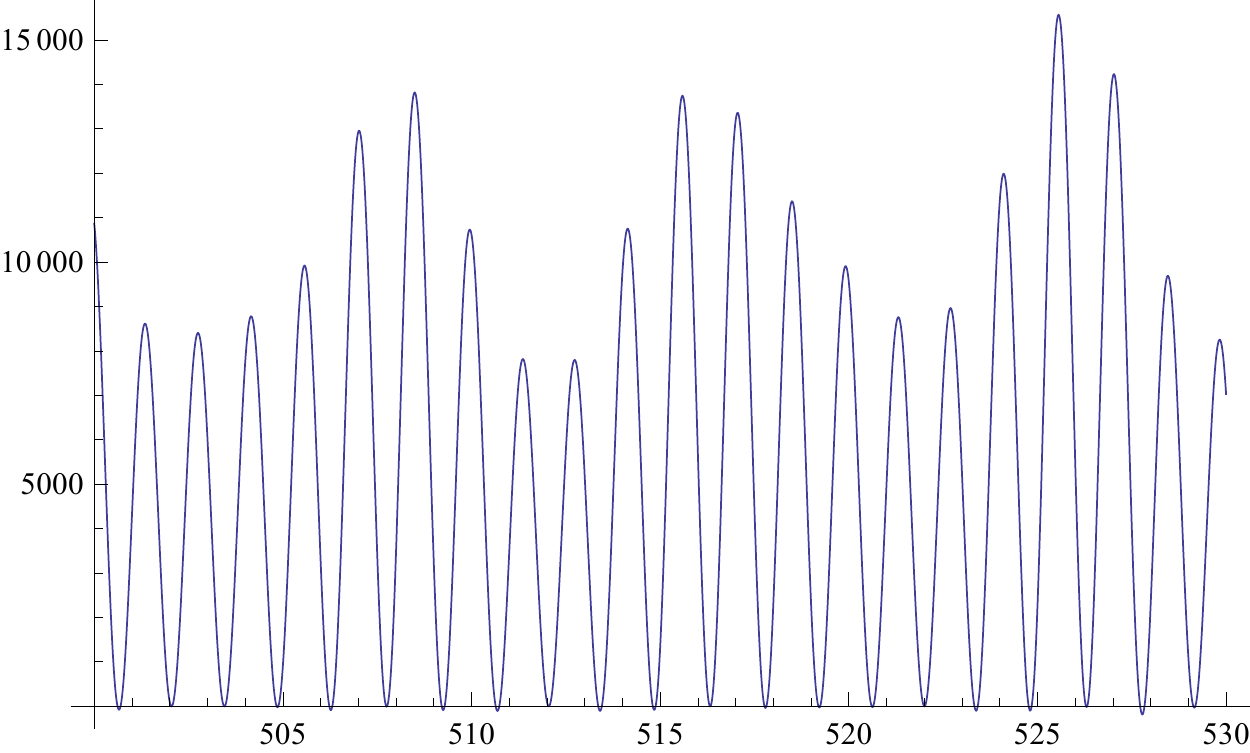}

\vspace{0.2cm}
\textsc{Figure 4}. $Z'''(t)Z^{(7)}(t)$ for $t\in[500,530]$.
\end{center}
\vspace{0.3cm}

Our next theorem shows that $Z^{(k)}(t)Z^{(\ell)}(t)$ has sign $(-1)^d$ a positive proportion of times.

\begin{theorem}\label{thm2}
Let $k+\ell$ be even and
\begin{align*}
S_{k,\ell}^+&:=\big\{t\in[T,2T]:Z^{(k)}(t)Z^{(\ell)}(t)> 0\big\},\\
S_{k,\ell}^-&:=\big\{t\in[T,2T]:Z^{(k)}(t)Z^{(\ell)}(t)< 0\big\}.
\end{align*}
Then if $k\equiv \ell\,(\emph{mod}\ 4)$, we have
\[
\emph{meas}(S_{k,\ell}^+)\gg_{k,\ell} T,
\]
and if $k\equiv \ell+2\,(\emph{mod}\ 4)$, we have
\[
\emph{meas}(S_{k,\ell}^-)\gg_{k,\ell} T.
\]
\end{theorem}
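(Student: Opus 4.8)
The plan is to make the sign $(-1)^d$ transparent by an integration by parts carried out pointwise. Writing $f=Z^{(k)}$, so that $f^{(2d)}=Z^{(\ell)}$ and $f^{(d)}=Z^{(s)}$ with $s=(k+\ell)/2$, the telescoped Leibniz identity gives
\[
(-1)^dZ^{(k)}(t)Z^{(\ell)}(t)=\big(Z^{(s)}(t)\big)^2+\frac{d}{dt}W(t),\qquad W=(-1)^d\sum_{j=0}^{d-1}(-1)^jZ^{(k+j)}Z^{(\ell-1-j)},
\]
which for $d=1$ reads $-Z^{(k)}Z^{(k+2)}=(Z^{(k+1)})^2-\tfrac12\big((Z^{(k)})^2\big)''$. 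Integrating over $[T,2T]$, the contribution of $\int W'$ is a boundary term $\ll_{k,\ell}T^{1/3}$ (each endpoint product being $\ll T^{1/6}\cdot T^{1/6}$), while $\int_T^{2T}(Z^{(s)})^2\,dt\sim\frac{T}{4^s(2s+1)}(\log\frac{T}{2\pi})^{2s+1}$; this re-derives the sign and the main term of \eqref{meanZkZl}. Thus the theorem is reduced to showing that the non-negative main term $(Z^{(s)})^2$ dominates the oscillating total derivative $W'$ on a set of measure $\gg_{k,\ell}T$.

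Set $G:=(-1)^dZ^{(k)}Z^{(\ell)}=(Z^{(s)})^2+W'$. The natural device for turning a positive mean into a positive proportion is the Cauchy--Schwarz inequality
\[
\mathrm{meas}\big\{t\in[T,2T]:G(t)>0\big\}\ \ge\ \frac{\big(\int_T^{2T}G\,dt\big)^2}{\int_T^{2T}G^2\,dt},
\]
whose numerator is $\gg T^2(\log T)^{4s+2}$ by \eqref{meanZkZl}. The denominator is the fourth moment $\int_T^{2T}(Z^{(k)})^2(Z^{(\ell)})^2\,dt$ of $\zeta$ and its derivatives, and for the bound to deliver $\gg_{k,\ell}T$ one needs it to be $\ll T(\log T)^{4s+2}$.

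This fourth moment is the main obstacle. Its true size is of order $T(\log T)^{4s+4}$, \emph{two} logarithms larger than the square of the first moment, because it is dominated by the sparse set on which $|\zeta(\tfrac12+it)|$ is abnormally large; inserting this honest size loses a factor $(\log T)^2$ and only yields $\gg T/(\log T)^2$. The crux is therefore to detect the sign of $G$ at \emph{typical} rather than extreme $t$, so that the relevant first and fourth moments are of matching logarithmic order. I would resolve this by mollification: introduce a short Dirichlet polynomial $M(s)=\sum_{n\le T^{\vartheta}}\mu(n)P\big(\tfrac{\log(T^{\vartheta}/n)}{\log T^{\vartheta}}\big)n^{-s}$ that damps the large values of $|\zeta|$, and establish mean-value asymptotics for the de-spiked quantities $\int_T^{2T}G\,|M|^2\,dt$ and $\int_T^{2T}G^2\,|M|^4\,dt$ in which the mollifier removes the two excess logarithms; Cauchy--Schwarz applied to the mollified data then gives $\mathrm{meas}\{G>0\}\gg_{k,\ell}T$. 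The genuinely hard step is the evaluation of this mollified fourth moment with derivatives present, since one must handle the arithmetic (divisor-type) correlations simultaneously with the polynomial weights $(\theta'(t)-\log n)^{k}$ and $(\theta'(t)-\log n)^{\ell}$ produced by differentiation.

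An alternative to mollification, which I expect also underlies the explicit constant in Theorem \ref{thm1}, is a Wirtinger-type argument over the gaps between consecutive zeros. In the case $d=1$ the reduction above says it suffices to prove that the non-negative function $h=(Z^{(k)})^2$ is concave on a positive proportion of $[T,2T]$; on each gap $[\gamma,\gamma']$ between consecutive zeros of $Z^{(k)}$ one has $h(\gamma)=h(\gamma')=0$ and $h'(\gamma)=h'(\gamma')=0$, so $\int_\gamma^{\gamma'}h''\,dt=0$ and a local inequality bounds below the portion of the gap on which $h''<0$. Averaging over the $\gg T\log T$ gaps supplies exactly the logarithm that the fourth-moment bound squanders, circumventing the large-value problem and producing a clean proportion. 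For general $d$ the remainder $W'$ is no longer a single second derivative, so the global mollified approach of the previous paragraph seems the more robust route to $\gg_{k,\ell}T$.
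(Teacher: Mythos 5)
Your proposal follows essentially the same route as the paper: after correctly diagnosing that the unmollified Cauchy--Schwarz loses two logarithms, you arrive at exactly the paper's device of bounding $\mathrm{meas}(S_{k,\ell}^{\pm})$ below by $\mathcal{S}_{k,\ell}^2/\mathcal{T}_{k,\ell}$ with the M\"obius mollifier, which is how Theorem \ref{thm2} is deduced from \eqref{keyinequality+}, \eqref{keyinequality-}, \eqref{asympS1} and \eqref{generalboundTkl}. The one ingredient you flag but do not supply --- the mollified fourth moment with $|M(\tfrac12+it)|^4$ and derivative weights --- is precisely the technical core of the paper (Proposition \ref{mollifiedfourth}), so the plan is right but its hardest step is asserted rather than proved.
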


\begin{remark}
\emph{We notice that these congruence conditions may be written in the equivalent forms
\begin{equation}\label{condition1}
|i^k+i^\ell|=2\qquad\text{and}\qquad |i^k+i^\ell|=0,
\end{equation}
respectively. When we consider case (B), a condition similar to \eqref{condition1} will be seen to be relevant.}
\end{remark}

\begin{remark}
\emph{It is known that the zeros of the $k$-th derivative of the  Riemann $\Xi$-function, where
\[
\Xi(s)=\frac{s(s-1)}{2}\pi^{-s/2}\Gamma\Big(\frac s2\Big)\zeta(s),
\]
become evenly spaced out as $k\rightarrow\infty$. This was conjectured by Farmer and Rhoades [\textbf{\ref{FR}}], and later proved by Ki [\textbf{\ref{K}}] (see also [\textbf{\ref{GH}}]). These works provide a rough heuristic that if $k,\ell\rightarrow\infty$ and have the same parity, then the zeros of $\Xi^{(k)}(t)$ and $\Xi^{(\ell)}(t)$ become pairwise close. The difference in our Theorem \ref{thm1} and Theorem \ref{thm2} to the mentioned works is that the order of differentiation is fixed. For $Z(t)$, Figures 1--4 above seem to suggest that the zeros of $Z^{(k)}(t)$ and $Z^{(\ell)}(t)$ are pairwise very close even for small $k,\ell$ with the same parity.
}
\end{remark}

For case (B), we define the function $\text{HARDY}(k,\ell,m,n)$ via the formula
\begin{align*}
\int_0^T Z^{(k)}(t)Z^{(\ell)}(t)Z^{(m)}(t)Z^{(n)}(t)dt&=\frac{\text{HARDY}(k,\ell,m,n)}{\pi^2}T(\log T)^{k+\ell+m+n+4}\\
&\qquad+O\big(T(\log T)^{k+\ell+m+n+3}\big).
\end{align*}
This is a rational function and is given explicitly by the following result.

\begin{theorem}\label{thmHardy}
We have
\begin{align*}
\emph{HARDY}(k,\ell,m,n)&=(-1)^{m+n}i^{k+\ell+m+n}3\int_0^1\int_0^1\int_0^1\int_0^1(u_1-u_2)^2\\
&\ \times \Big(\frac12+(u_1-u_2)u_3-u_1\Big)^k\Big(\frac12+(u_2-u_1)u_3-u_2\Big)^\ell\\
&\ \times \Big(\frac12+(u_1-u_2)u_4-u_1\Big)^m\Big(\frac12+(u_2-u_1)u_4-u_2\Big)^ndu_1du_2du_3du_4.
\end{align*}
\end{theorem}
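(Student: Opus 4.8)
The plan is to reduce everything to the leading-order behaviour of the derivatives and then to a weighted divisor-correlation sum on the ``diagonal''. First I would record that, to the order of accuracy needed (only the top power of $\log T$ survives), the approximate functional equation for $Z$ gives
$$Z^{(k)}(t)=\sum_{n\le\sqrt{t/2\pi}}\frac{1}{\sqrt n}\Big[i^k\big(\theta'(t)-\log n\big)^k e^{i(\theta(t)-t\log n)}+(-i)^k\big(\theta'(t)-\log n\big)^k e^{-i(\theta(t)-t\log n)}\Big]+\cdots,$$
since each differentiation is most profitably spent on the oscillating exponential (the terms in which a derivative falls on $\theta'(t)=\tfrac12\log\tfrac{t}{2\pi}+O(1/t)$, or on a lower-order part of $\theta$, produce a smaller power of $\log T$). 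Writing $L=\log\tfrac{t}{2\pi}$, the weight attached to the index $n$ is $\big(\tfrac L2-\log n\big)^k$, and the two exponentials carry the constants $i^k$ and $(-i)^k$.

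Next I would multiply the four such expansions and integrate over $t\in[0,T]$. Expanding produces $2^4$ terms indexed by signs $\epsilon_j\in\{\pm1\}$; a term survives to the main order only if the $\theta(t)$-phases cancel, i.e. $\sum_j\epsilon_j=0$ (two ``holomorphic'' and two ``anti-holomorphic'' factors), and then only the stationary part of the residual $t$-phase contributes, forcing the diagonal $\prod_{\epsilon_j=+1}n_j=\prod_{\epsilon_j=-1}n_j$. The admissible patterns are the $\binom{4}{2}=6$ ways of choosing the holomorphic factors; these organise into the three partitions of $\{k,\ell,m,n\}$ into two pairs, each partition appearing with its two complementary colourings. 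A pattern with $+$ signs on a set $P$ carries the constant $i^{k+\ell+m+n}(-1)^{\sum_{j\notin P}k_j}$, so (using that $k+\ell+m+n$ is even) a pattern and its complement contribute identically, and a change of variables shows that the three partitions contribute equally as well; fixing the representative $P=\{k,\ell\}$ this is exactly the source of the factor $3$ and of the prefactor $(-1)^{m+n}i^{k+\ell+m+n}$.

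It then remains to evaluate one diagonal sum
$$\sum_{n_1n_2=n_3n_4}\frac{\big(\tfrac L2-\log n_1\big)^{k}\big(\tfrac L2-\log n_2\big)^{\ell}\big(\tfrac L2-\log n_3\big)^{m}\big(\tfrac L2-\log n_4\big)^{n}}{\sqrt{n_1n_2n_3n_4}}.$$
Parametrising the solutions of $n_1n_2=n_3n_4$ in the usual way (four variables subject to a coprimality condition and a one-parameter scaling redundancy), the coprimality furnishes the arithmetic constant $1/\zeta(2)=6/\pi^2$, and replacing each $\sum_n \tfrac1n\,f\big(\tfrac{\log n}{L}\big)$ by $L\int f$ turns the weighted sum into a four-fold integral in normalised logarithms. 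Taking $u_1,u_2$ to record the endpoints of the log-range and $u_3,u_4$ to interpolate within the holomorphic and anti-holomorphic pairs sends $\tfrac1L\log n_1\mapsto u_1-(u_1-u_2)u_3$ and its companions, so that $\tfrac L2-\log n_j$ becomes $L\big(\tfrac12+(u_1-u_2)u_3-u_1\big)$ and the other three factors of the statement, while the measure (Jacobian together with the local density of diagonal solutions) supplies $(u_1-u_2)^2$. Performing $\int_0^T(\cdots)\,dt$ yields the weight $T(\log T)^{k+\ell+m+n+4}$, and comparison with the definition of $\mathrm{HARDY}(k,\ell,m,n)$ gives the displayed formula. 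As a check, $(k,\ell,m,n)=(0,0,0,0)$ collapses the integral to $3\int_{[0,1]^2}(u_1-u_2)^2\,du_1\,du_2=\tfrac12$, recovering Ingham's $\int_0^T|\zeta(\tfrac12+it)|^4\,dt\sim\frac{1}{2\pi^2}T(\log T)^4$.

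The main obstacle is the rigorous error analysis, not the formal computation. After multiplying four approximate functional equations one must show that the off-diagonal terms, the unbalanced sign patterns, and all subleading derivative terms contribute only $O\big(T(\log T)^{k+\ell+m+n+3}\big)$; this is precisely where the heuristic must be replaced by the fourth-moment machinery (in the style of Ingham, Heath-Brown and Motohashi, or by the methods of [\textbf{\ref{H}}] already underlying \eqref{meanZkZl}), most cleanly by taking the main term of the shifted fourth moment of $\zeta$ and extracting the derivatives by differentiating in the shift parameters, with the pointwise bound $Z^{(k)}(t)\ll_k T^{1/6}$ controlling the tails. A secondary, purely algebraic obstacle is to verify that the three partitions really do give equal integrals: this is not visible from the integrand, but follows from the symmetry of $\mathrm{HARDY}$ under permutations of $(k,\ell,m,n)$ combined with the change of variables above.
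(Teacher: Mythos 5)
Your closing remark --- that the clean route is to take the main term of the shifted fourth moment of $\zeta$ and extract the derivatives by differentiating in the shift parameters --- is in fact the paper's entire proof: one writes $Z^{(k)}(t)=i^{k}e^{i\theta(t)}(\log T)^{k}\bigl(\tfrac12+\tfrac{1}{\log T}\tfrac{d}{d\alpha}\bigr)^{k}\zeta(\tfrac12+\alpha+it)\big|_{\alpha=0}$ plus an admissible error, converts two of the four factors via $Z^{(m)}(t)=(-1)^{m}Z^{(m)}(-t)$, and applies the operator identity $R\bigl(\tfrac{1}{\log T}\tfrac{d}{d\alpha}\bigr)X^{\alpha}=R\bigl(\tfrac{\log X}{\log T}\bigr)X^{\alpha}$ to Proposition~\ref{shiftedfourth}; the prefactor $3=\pi^{2}/(2\zeta(2))$ and the kernel $(u_1-u_2)^2$ come straight from that asymptotic. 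The derivation you actually present, however --- four Riemann--Siegel sums of length $\sqrt{t/2\pi}$, balanced sign patterns, and \emph{only} the diagonal $n_1n_2=n_3n_4$ --- is not a correct heuristic for the leading term, and the error analysis you defer is not merely hard but impossible: for the fourth moment the off-diagonal terms contribute a positive proportion of the top power of $\log T$. Concretely, at $(k,\ell,m,n)=(0,0,0,0)$ each of the six balanced patterns yields the restricted diagonal sum $\sum_{n_1n_2=n_3n_4,\ n_i\le\sqrt{t/2\pi}}(n_1n_2)^{-1}\sim\zeta(2)^{-1}\cdot\tfrac16\cdot(\tfrac12\log t)^{4}$, so the diagonal total is $\tfrac{3}{8\pi^{2}}(\log t)^{4}$ rather than the true $\tfrac{1}{2\pi^{2}}(\log t)^{4}$; the missing quarter is precisely Ingham's off-diagonal (shifted divisor) contribution, which in the paper's framework is carried by the five ``swap'' terms of Theorem~\ref{BB}. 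Your consistency check at $(0,0,0,0)$ tests the statement of the theorem, not your derivation, which is why the discrepancy goes unnoticed; relatedly, your substitution $\tfrac1L\log n_1\mapsto u_1-(u_1-u_2)u_3$ lets the normalised logarithm range over $[0,1]$ although the Riemann--Siegel sum only allows $[0,\tfrac12]$.

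A second, independent problem is the bookkeeping behind the factor $3$. The three pairings do \emph{not} contribute equally: by your own formula the pattern with holomorphic set $P$ carries the constant $i^{k+\ell+m+n}(-1)^{\sum_{j\notin P}k_j}$, and already for $(k,\ell,m,n)=(1,1,0,0)$ the three pairings carry the constants $+1,-1,-1$ (times $i^{2}$), so they cannot each equal one third of the total. In the paper the factor $3$ is simply $\pi^{2}/(2\zeta(2))$, i.e.\ it is produced by the arithmetic factor $1/\zeta(2)$ together with the normalisation of $\mathrm{HARDY}$ by $\pi^{2}$, while all six patterns are already packaged inside the single kernel $(u_1-u_2)^2\,du_1du_2du_3du_4$ (the $u_3,u_4$ integrations interpolate between the swap terms via $\frac{1-y^{-(\alpha+\beta)}}{\alpha+\beta}=(\log y)\int_0^1y^{-(\alpha+\beta)u}du$). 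I would discard the diagonal heuristic entirely and promote your final sentence to the proof: take Proposition~\ref{shiftedfourth} as the input and differentiate in the shifts.
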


A consequence of Theorem \ref{thmHardy} is

\begin{corollary}\label{corHardy} 
If $k+\ell+m+n$ is odd, then we have $\emph{HARDY}(k,\ell,m,n)=0$.
\end{corollary}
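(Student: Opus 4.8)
The plan is to exploit the reality of $\mathrm{HARDY}(k,\ell,m,n)$ against the explicit formula of Theorem~\ref{thmHardy}, in which the only non-real ingredient is the factor $i^{k+\ell+m+n}$. First I would recall that Hardy's function is real-valued on the real line: the factor $e^{i\theta(t)}$ is designed precisely so that $Z(t)=e^{i\theta(t)}\zeta(\tfrac12+it)\in\mathbb{R}$ for real $t$, and therefore every derivative $Z^{(j)}(t)$ is real as well. Hence the integrand $Z^{(k)}(t)Z^{(\ell)}(t)Z^{(m)}(t)Z^{(n)}(t)$ is real, so $\int_0^T(\cdots)\,dt$ is real for every $T$; dividing by $\tfrac{1}{\pi^2}T(\log T)^{k+\ell+m+n+4}$ and letting $T\to\infty$ in the defining asymptotic shows that $\mathrm{HARDY}(k,\ell,m,n)$ is a real number.

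On the other hand, in the formula of Theorem~\ref{thmHardy} every factor apart from $i^{k+\ell+m+n}$ is real: the constant $(-1)^{m+n}\cdot 3$ is real, and the quadruple integral has integrand $(u_1-u_2)^2$ times integer powers of the real linear forms $\tfrac12+(u_1-u_2)u_3-u_1$ and its companions, integrated over the real cube $[0,1]^4$, hence is real. Thus $\mathrm{HARDY}(k,\ell,m,n)=i^{k+\ell+m+n}\,r$ with $r\in\mathbb{R}$. When $k+\ell+m+n$ is odd we have $i^{k+\ell+m+n}\in\{i,-i\}$, so the right-hand side is purely imaginary; as it is also real, it must vanish, which is exactly the assertion.

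I do not expect a genuine obstacle here: the parity hypothesis enters only through the power of $i$, and the sole point requiring care is the (routine) remark that the leading coefficient of a real-valued asymptotic expansion is itself real. As a self-contained alternative that avoids appealing to the reality of $Z$, one can instead show that the integral alone vanishes, by means of the single measure-preserving involution $(u_1,u_2,u_3,u_4)\mapsto(1-u_2,\,1-u_1,\,1-u_3,\,1-u_4)$ of $[0,1]^4$: a one-line computation shows that each of the four linear forms is sent to its own negative while $(u_1-u_2)^2$ is left unchanged, so the integrand is multiplied by $(-1)^{k+\ell+m+n}$. Writing $I$ for the quadruple integral, the substitution therefore yields $I=(-1)^{k+\ell+m+n}I$, which forces $I=0$, and hence $\mathrm{HARDY}(k,\ell,m,n)=0$, whenever $k+\ell+m+n$ is odd.
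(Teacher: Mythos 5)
Your proof is correct, and both of your arguments reach the conclusion by routes that differ from the paper's. The paper's own deduction recentres the variables via $u_i\mapsto u_i+\tfrac12$ for $i=1,2$, after which the four linear forms lose their constant terms and the integrand becomes a homogeneous polynomial of degree $k+\ell+m+n+2$ in $(u_1,u_2)$; when $k+\ell+m+n$ is odd every monomial $u_1^au_2^b$ has $a+b$ odd, hence is odd in $u_1$ or in $u_2$, and integrates to zero over the symmetric box $[-\tfrac12,\tfrac12]^2$. Your involution $(u_1,u_2,u_3,u_4)\mapsto(1-u_2,1-u_1,1-u_3,1-u_4)$ packages the same parity phenomenon into a single substitution --- in the recentred coordinates it is $(v_1,v_2,u_3,u_4)\mapsto(-v_2,-v_1,1-u_3,1-u_4)$ --- and it has the advantage of needing no monomial expansion: each of the four forms is indeed negated (e.g. $\tfrac12+(u_1-u_2)u_3-u_1\mapsto \tfrac12+(u_1-u_2)(1-u_3)-(1-u_2)=-\bigl(\tfrac12+(u_1-u_2)u_3-u_1\bigr)$), while $(u_1-u_2)^2$ and Lebesgue measure on $[0,1]^4$ are preserved, so $I=(-1)^{k+\ell+m+n}I$ as you claim. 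Your first argument, playing the reality of $Z(t)$ (hence of $\text{HARDY}(k,\ell,m,n)$, as the leading coefficient of an asymptotic expansion of a real quantity against a positive real gauge) off against the factor $i^{k+\ell+m+n}$ multiplying a manifestly real quadruple integral, is genuinely different in character: it is computation-free and uses nothing about the shape of the integral beyond its realness. Both arguments are complete; the involution version is the closer cousin of the paper's proof and is, if anything, slightly cleaner.
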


In the case $k+\ell+m+n$ is even, the sign of the function $\text{HARDY}(k,\ell,m,n)$ is not well-understood. The tables below show its values when $k+\ell+m+n=6,8$ or $10$.

\vspace{3mm}
\begin{table}[h!]
\centering
\begin{tabular}{ |c|c||c|c||c|c|  }
 \hline
 $(k,\ell,m,n)$ &  & $(k,\ell,m,n)$ &  & $(k,\ell,m,n)$ & \\
 \hline\hline
$(6,0,0,0)$   & $-\frac{1}{2688}$  & $(4,1,1,0)$ &   $\frac{1}{40320}$ & $(3,1,1,1)$ & $-\frac{19}{201600}$\\[3pt]
$(5,1,0,0)$  &   $\frac{1}{8064}$ & $(3,3,0,0)$  & $\frac{1}{9600}$ & $(2,2,2,0)$ & $-\frac{61}{604800}$\\[3pt]
$(4,2,0,0)$ & $-\frac{1}{5760}$ & $(3,2,1,0)$ &  $\frac{1}{28800}$ & $(2,2,1,1)$ & $\frac{19}{604800}$\\[3pt]
 \hline
\end{tabular}
\caption{HARDY$(k,\ell,m,n)$ with $k+\ell+m+n=6$ and $k\geq\ell\geq m\geq n$.}
\end{table}

\begin{table}[h!]
\centering
\begin{tabular}{ |c|c||c|c||c|c|  }
 \hline
 $(k,\ell,m,n)$ &  & $(k,\ell,m,n)$ &  & $(k,\ell,m,n)$ & \\
 \hline\hline
$(8,0,0,0)$   & $\frac{1}{16896}$  & $(5,2,1,0)$ &   $-\frac{1}{197120}$ & $(4,2,1,1)$ & $-\frac{23}{5322240}$\\[3pt]
$(7,1,0,0)$  &   $-\frac{1}{50688}$ & $(5,1,1,1)$  & $\frac{23}{1774080}$ & $(3,3,2,0)$ & $-\frac{41}{8870400}$\\[3pt]
$(6,2,0,0)$ & $\frac{1}{39424}$ & $(4,4,0,0)$ &  $\frac{3}{140800}$ & $(3,3,1,1)$ & $\frac{19}{1774080}$\\[3pt]
$(6,1,1,0)$ & $-\frac{1}{354816}$ & $(4,3,1,0)$ &  $-\frac{3}{895600}$ & $(3,2,2,1)$ & $-\frac{17}{5322240}$\\[3pt]
$(5,3,0,0)$ & $-\frac{3}{197120}$ & $(4,2,2,0)$ &  $\frac{331}{26611240}$ & $(2,2,2,2)$ & $\frac{17}{1774080}$\\[3pt]
 \hline
\end{tabular}
\caption{HARDY$(k,\ell,m,n)$ with $k+\ell+m+n=8$ and $k\geq\ell\geq m\geq n$.}
\end{table}

\newpage

\begin{table}[h!]
\centering
\begin{tabular}{ |c|c||c|c||c|c|  }
 \hline
 $(k,\ell,m,n)$ &  & $(k,\ell,m,n)$ &  & $(k,\ell,m,n)$ & \\
 \hline\hline
$(10,0,0,0)$   & $-\frac{3}{292864}$  & $(6,3,1,0)$ &   $\frac{1}{2795520}$ & $(4,4,2,0)$ & $-\frac{571}{358758400}$\\[3pt]
$(9,1,0,0)$  &   $\frac{1}{292864}$ & $(6,2,2,0)$  & $-\frac{-173}{92252160}$ & $(4,4,1,1)$ & $\frac{241}{358758400}$\\[3pt]
$(8,2,0,0)$ & $-\frac{1}{239616}$ & $(6,2,1,1)$ &  $\frac{1}{1464320}$ & $(4,3,3,0)$ & $\frac{467}{1076275200}$\\[3pt]
$(8,1,1,0)$ & $\frac{1}{2635776}$ & $(5,5,0,0)$ &  $\frac{3}{1304576}$ & $(4,3,2,1)$ & $\frac{1403}{3228825600}$\\[3pt]
$(7,3,0,0)$ & $\frac{1}{399360}$ & $(5,4,1,0)$ &  $\frac{3}{6522880}$ & $(4,2,2,2)$ & $-\frac{127}{97843200}$\\[3pt]
$(7,2,1,0)$ & $\frac{1}{1198080}$ & $(5,3,2,0)$ &  $\frac{467}{645765120}$ & $(3,3,3,1)$ & $-\frac{467}{358758400}$\\[3pt]
$(7,1,1,1)$ & $-\frac{3}{1464320}$ & $(5,3,1,1)$ &  $-\frac{-199}{129153024}$ & $(3,3,2,2)$ & $\frac{127}{293529600}$\\[3pt]
$(6,4,0,0)$ & $-\frac{-3}{931840}$ & $(5,2,2,1)$ &  $\frac{277}{645765120}$ &  & \\[3pt]
 \hline
\end{tabular}
\caption{HARDY$(k,\ell,m,n)$ with $k+\ell+m+n=10$ and $k\geq\ell\geq m\geq n$.}
\end{table}

The function $\text{HARDY}(k,\ell,m,n)$ was also evaluated by the second author in [\textbf{\ref{H1}}] for some small cases. Bases on these we have the following conjecture.

\begin{conjecture}
If $k+\ell+m+n$ is even, then we have $\emph{HARDY}(k,\ell,m,n)\ne 0$. Moreover, a necessary and sufficient condition that $\emph{HARDY}(k,\ell,m,n)< 0$ is that
\[
|i^k+i^\ell+i^m+i^n|=2.
\]
That is to say $\{k,\ell,m,n\}$ is the union of pairs, and these pairs satisfy, respectively, each of the conditions in \eqref{condition1}.
\end{conjecture}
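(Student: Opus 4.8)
The plan is to read the conjecture off the integral formula of Theorem~\ref{thmHardy}. Put $N=k+\ell+m+n$ and abbreviate the four affine forms $A=\frac12+(u_1-u_2)u_3-u_1$, $B=\frac12+(u_2-u_1)u_3-u_2$, $C=\frac12+(u_1-u_2)u_4-u_1$, $D=\frac12+(u_2-u_1)u_4-u_2$, so that $\mathrm{HARDY}(k,\ell,m,n)=(-1)^{m+n}i^{N}\,3J$ with $J=\int_{[0,1]^4}(u_1-u_2)^2A^kB^\ell C^mD^n\,du$. Since we assume $N$ even, $i^N=(-1)^{N/2}$ and the prefactor $(-1)^{m+n}i^N$ is merely a sign; the whole conjecture therefore reduces to two assertions about the single real integral $J$: that $J\neq0$, and that its sign, combined with the prefactor, is negative precisely when $|i^k+i^\ell+i^m+i^n|=2$.

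First I would reduce the dimension of $J$. As $u_3$ occurs only in $A,B$ and $u_4$ only in $C,D$, and each form is affine in its variable, the inner integrals $F_{k,\ell}(u_1,u_2)=\int_0^1A^kB^\ell\,du_3$ and $F_{m,n}(u_1,u_2)=\int_0^1C^mD^n\,du_4$ are explicit polynomials and $J=\int_0^1\!\int_0^1(u_1-u_2)^2F_{k,\ell}F_{m,n}\,du_1du_2$. Centering by $u_1=\frac12+s$, $u_2=\frac12+r$ and setting $P=-\tfrac{s+r}2$, $\delta=\tfrac{s-r}2$, $\tau=2u_3-1$ turns $A,B$ into $P+\delta\tau,\,P-\delta\tau$, so $F_{k,\ell}=\frac12\int_{-1}^1(P+\delta\tau)^k(P-\delta\tau)^\ell\,d\tau$, and the domain becomes the diamond $|P|+|\delta|\le\frac12$. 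Each form is homogeneous of degree one in $(s,r)$, so under $(s,r)\mapsto(-s,-r)$ the integrand of $J$ is multiplied by $(-1)^{N}$; as the diamond is symmetric this gives $J=(-1)^{N}J$, re-proving Corollary~\ref{corHardy} and isolating the even case.

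Next I would dispose of the pure-parity cases. If $k,\ell,m,n$ are all even then every factor $(P\pm\delta\tau)^{k}$, etc., is nonnegative, so $F_{k,\ell},F_{m,n}\ge0$ and $J>0$; here $(-1)^{m+n}=1$ and the prefactor equals $(-1)^{N/2}=i^k i^\ell i^m i^n$, the product of the four signs $i^{(\cdot)}\in\{\pm1\}$. An elementary count finishes this case: a sum of four $\pm1$'s has modulus $2$ iff an odd number of them are $-1$, i.e. iff their product is $-1$; hence $\mathrm{HARDY}<0\Leftrightarrow(-1)^{N/2}=-1\Leftrightarrow|i^k+i^\ell+i^m+i^n|=2$, and $J>0$ gives the non-vanishing. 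The same product-of-signs identity holds when $k,\ell,m,n$ are all odd (now $(-1)^{m+n}=1$ and each $i^{(\cdot)}=\pm i$, the factor of $i$ cancelling in modulus), so that case is reduced to the single remaining point $J>0$, for which the symmetrised form $F_{k,\ell}=\frac14\int_{-1}^1(P^2-\delta^2\tau^2)^{\min(k,\ell)}\big[(P+\delta\tau)^{|k-\ell|}+(P-\delta\tau)^{|k-\ell|}\big]d\tau$ (with nonnegative bracket, $|k-\ell|$ being even) is the natural tool.

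The main obstacle is the mixed case, two odd and two even exponents, where $A^kB^\ell C^mD^n$ is genuinely indefinite on $[0,1]^4$ and $J$ arises from cancellation — already for $(5,1,0,0)$ one has $J<0$ with an indefinite integrand. Here neither the prefactor nor a naive positivity decides the sign on its own: using the symmetry of $\mathrm{HARDY}$ in its arguments I would group the two odd exponents as one pair and the two even as the other, which renders $F_{\text{even}}\ge0$ but leaves $F_{\text{odd}}$ (an integral of $(P^2-\delta^2\tau^2)^{\text{odd}}\times(\text{nonneg})$) indefinite, so the sign of $J$ hinges on a delicate weighted average of $P^2-\delta^2\tau^2$ over the diamond. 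Controlling this — ideally by exhibiting $J$, up to the overall sign forced by $|i^k+i^\ell+i^m+i^n|$, as a positive combination of squares, or by comparing $J$ with the Wick-type product of the second-moment constants from \eqref{meanZkZl} (whose individual signs $(-1)^d$ encode exactly the two alternatives in \eqref{condition1}) and estimating the non-Gaussian correction — is the crux of the problem and the reason the statement remains conjectural; I would expect the non-vanishing $J\neq0$ to be the more tractable half, obtainable from strict positivity of a dominant term in such a representation.
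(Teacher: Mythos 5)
This statement is labelled a \emph{Conjecture} in the paper, and the authors give no proof of it: the only support offered is the closed-form integral of Theorem \ref{thmHardy}, the numerical tables for $k+\ell+m+n=6,8,10$, and the computations cited from [\textbf{\ref{H1}}]. Your proposal is likewise not a proof, and you say so yourself, so it should be judged as a partial reduction. The parts you carry out are correct. Writing $\mathrm{HARDY}(k,\ell,m,n)=(-1)^{m+n}i^{N}\cdot 3J$ with $N=k+\ell+m+n$, passing to the diamond $|P|+|\delta|\le\tfrac12$ (which does reproduce Corollary \ref{corHardy}), and observing that when all four exponents are even the integrand of $J$ is nonnegative and positive on a set of positive measure, you get $J>0$; combined with the identity $i^{N}=i^{k}i^{\ell}i^{m}i^{n}$ and the elementary fact that a sum of four numbers from $\{\pm1\}$ has modulus $2$ exactly when their product is $-1$, this genuinely settles the conjecture on the all-even stratum, and the same bookkeeping correctly reduces the all-odd stratum to the single assertion $J>0$. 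That is a real (if small) piece of progress beyond what the paper records.

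The gaps are the two you name, and they are genuine. In the all-odd case the symmetrised kernel involves $(P^2-\delta^2\tau^2)^{\min(k,\ell)}$ with odd exponent, which is indefinite on the diamond, so $J>0$ does not follow from the representation you write down; it is consistent with the tables (e.g.\ $(3,1,1,1)$, $(7,1,1,1)$, $(3,3,3,1)$ all give $J>0$) but remains unproved. In the mixed case (two odd, two even exponents) the situation is structurally different: there $|i^k+i^\ell+i^m+i^n|=2$ is equivalent to $i^{N}=+1$ rather than $i^{N}=-1$, so the conjectured sign forces $J$ itself to change sign across this family --- your example $(5,1,0,0)$, where $\mathrm{HARDY}=\tfrac{1}{8064}>0$ and hence $J<0$, is correct --- and no pointwise positivity argument can work. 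Neither the hoped-for sum-of-squares representation nor the comparison with the second-moment constants of \eqref{meanZkZl} is actually constructed, and no mechanism forcing $J\ne0$ is identified. So what you have is a correct verification of the conjecture in one sub-case together with an accurate diagnosis of why the remaining sub-cases are hard; the statement itself remains, as in the paper, a conjecture.
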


Our method relies on the mollified second and mollified fourth moments of the Riemann zeta-function. Let $M(s)$ be the usual mollifier,
\[
M(s)=\sum_{n\leq y}\frac{\mu(n)P(\frac{\log y/n}{\log y})}{n^s},
\]
where $y=T^\vartheta$, $0<\vartheta<1$, and $P$ is some polynomial with $P(0)=0$ and $P(1)=1$. Trivially we have
\[
\int_T^{2T}Z^{(k)}(t)Z^{(\ell)}(t)|M(\tfrac12+it)|^2dt\leq \int_{S_{k,\ell}^+}Z^{(k)}(t)Z^{(\ell)}(t)|M(\tfrac12+it)|^2dt.
\]
An application of Cauchy-Schwarz's inequality then leads to
\begin{align*}
\int_T^{2T}Z^{(k)}(t)Z^{(\ell)}(t)&|M(\tfrac12+it)|^2dt\\
&\leq \text{meas}(S_{k,\ell}^+)^{1/2}\bigg(\int_T^{2T}Z^{(k)}(t)^2Z^{(\ell)}(t)^2|M(\tfrac12+it)|^4dt\bigg)^{1/2}.
\end{align*}
Thus, if the left hand side is non-negative, we can
square both sides and obtain
\begin{equation}\label{keyinequality+}
\text{meas}(S_{k,\ell}^+)\geq \frac{\mathcal{S}_{k,\ell}^2}{\mathcal{T}_{k,\ell}},
\end{equation}
where
\[
\mathcal{S}_{k,\ell}=\int_T^{2T}Z^{(k)}(t)Z^{(\ell)}(t)|M(\tfrac12+it)|^2dt
\]
and
\[
\mathcal{T}_{k,\ell}=\int_T^{2T}Z^{(k)}(t)^2Z^{(\ell)}(t)^2|M(\tfrac12+it)|^4dt.
\]
Similarly, if $\mathcal{S}_{k,\ell}\leq 0$, then we obtain that
\begin{equation}\label{keyinequality-}
\text{meas}(S_{k,\ell}^-)\geq \frac{\mathcal{S}_{k,\ell}^2}{\mathcal{T}_{k,\ell}}.
\end{equation}

The asymptotic formula for $\mathcal{S}_{k,\ell}$ follows from the work of Conrey [\textbf{\ref{C}}], while that for $\mathcal{T}_{k,\ell}$ follows from the shifted mollified fourth moment of the Riemann zeta-function which we shall prove in Proposition \ref{mollifiedfourth} in Section \ref{sectionfourth} below. Previously, only its correct order of magnitude and the asymptotic formula with $|M(\frac12+it)|^4$ being replaced by $|\widetilde{M}(\frac12+it)|^2$, where
\[
\widetilde{M}(s)=\sum_{n\leq y}\frac{a(n)P(\frac{\log y/n}{\log y})}{n^s}
\]
with $a(n)=\mu_2(n)$ or $a(n)=d_r(n)$, are known (see [\textbf{\ref{B}}; Theorem 1.4] and [\textbf{\ref{B1}}; Theorem 3.4] or [\textbf{\ref{BM}}; Lemma 3.1], respectively).

The paper is organised as follows. We include the necessary lemmas in Section \ref{sectionlemmas}. Section \ref{sectionSkl} is to evaluate $\mathcal{S}_{k,\ell}$. We obtain the asymptotic formula for the shifted mollified fourth moment of $\zeta(s)$ in Section \ref{sectionfourth} and from that derive the estimate for $\mathcal{T}_{k,\ell}$ in Section \ref{sectionTkl}. We deduce Theorem \ref{thm1} and Theorem \ref{thm2} in Section \ref{proofthm12}, and Theorem \ref{thmHardy} and Corollary \ref{corHardy} in Section \ref{proofthm3}.

\section{Various lemmas}\label{sectionlemmas}

\begin{lemma}\label{EM}
Suppose $f_j$ are fixed smooth functions for $1\leq j\leq 4$. Then we have
\begin{align*}
&\sum_{\substack{m_1n_1,m_2n_2\leq y\\m_1m_2,n_1n_2\leq y}}\frac{1}{m_1m_2n_1n_2}f_1\Big(\frac{\log y/m_1n_1}{\log y}\Big)f_2\Big(\frac{\log y/m_2n_2}{\log y}\Big)f_3\Big(\frac{\log y/m_1m_2}{\log y}\Big)f_4\Big(\frac{\log y/n_1n_2}{\log y}\Big)\\
&\qquad=(\log y)^4\iiiint_{\substack{t_1+t_3,t_2+t_4\leq 1\\t_1+t_2,t_3+t_4\leq 1}} f_1(1-t_1-t_3)f_2(1-t_2-t_4)\\
&\qquad\qquad\qquad\qquad\qquad\times f_3(1-t_1-t_2)f_4(1-t_3-t_4)dt_1dt_2dt_3dt_4+O\big((\log y)^3\big).
\end{align*}
\end{lemma}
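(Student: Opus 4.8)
The plan is to read the left-hand side as a four-dimensional Riemann sum and to show that, after normalising logarithms by $L:=\log y$, it converges to the stated volume integral with an error saving one power of $L$. Writing $t_1=\frac{\log m_1}{L}$, $t_2=\frac{\log m_2}{L}$, $t_3=\frac{\log n_1}{L}$, $t_4=\frac{\log n_2}{L}$, the four arguments of $f_1,\dots,f_4$ become exactly $1-t_1-t_3$, $1-t_2-t_4$, $1-t_1-t_2$, $1-t_3-t_4$, while the four hard constraints $m_1n_1,m_2n_2,m_1m_2,n_1n_2\le y$ become $t_1+t_3\le 1$, $t_2+t_4\le 1$, $t_1+t_2\le 1$, $t_3+t_4\le 1$, i.e.\ membership in the polytope $\mathcal R$ over which the integral is taken. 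Thus the summand is $\frac{1}{m_1m_2n_1n_2}H(t_1,t_2,t_3,t_4)$, where $H$ is the product of the $f_j$ evaluated at these linear forms, restricted to $\mathcal R$. The fact that the normalised arguments and the defining inequalities of $\mathcal R$ match those in the sum exactly is what makes the limit clean.

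The engine is the one-variable estimate: for a fixed smooth $g$ and any $Y\ge 1$,
\[
\sum_{n\le Y}\frac1n\,g\Big(\frac{\log n}{L}\Big)=L\int_0^{(\log Y)/L}g(u)\,du+O(1),
\]
which follows from partial summation together with $\sum_{n\le x}\frac1n=\log x+\gamma+O(1/x)$. Each summation of a $1/n$-weighted smooth cutoff at scale $y$ therefore produces one factor of $L$ and turns the sum into an integral over the normalised variable, with a bounded error that is uniform in the remaining parameters because the $f_j$ are fixed smooth functions and the linear forms have bounded coefficients.

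I would then apply this estimate iteratively, summing one variable at a time. Summing first over $n_2$ with $m_1,m_2,n_1$ fixed, the constraints $m_2n_2\le y$ and $n_1n_2\le y$ confine $n_2$ to $\log n_2\le L-\max(\log m_2,\log n_1)$, while $f_2$ and $f_4$ supply the smooth weight; the estimate converts this inner sum into $L$ times an integral in $t_4$ up to the piecewise-linear limit $\min(1-t_2,1-t_3)$, plus $O(1)$. Repeating for $n_1$, then $m_2$, then $m_1$ produces the factor $L^4$ and the four-fold integral of $H$ over $\mathcal R$, which is precisely the stated main term. The accumulated error is controlled by noting that the $O(1)$ from any inner summation is amplified by at most $L^3$ coming from the other three summations (each a $1/n$-sum of bounded weights over a range $\le y$, hence $\ll L$), so the total error is $O(L^3)$.

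The main technical point is the bookkeeping of the errors through the four nested summations in the presence of the coupled hard cutoffs: the effective upper limit for each inner variable is a \emph{maximum} of logarithms of the outer variables, so the resulting integral limits are piecewise linear in the outer normalised variables, and one must check that this structure is consistent with $\mathcal R$ and does not disturb the leading term. Since the normalised arguments of the $f_j$ and the defining inequalities of $\mathcal R$ coincide exactly with those appearing in the sum, the leading term is unaffected; the only effect of the boundaries of $\mathcal R$ (and of the constant $\gamma$ and the $O(1/n)$ tails in the one-variable estimate) is to contribute to the error, which is of codimension one and hence $O(L^3)=O\big((\log y)^3\big)$, as claimed.
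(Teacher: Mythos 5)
Your proposal is correct and takes essentially the same route as the paper: both convert the sum into the four-fold integral by iterating a one-variable partial-summation estimate (the paper's version being the two-weight lemma of Bui, Conrey and Young), summing one variable at a time and noting that each inner $O(1)$ error is amplified by at most $(\log y)^3$ from the remaining harmonic sums. The only organisational difference is that the paper first splits the sum according to whether $m_2\le n_1$ or $n_1\le m_2$ so that every inner cutoff becomes a single threshold and all intermediate weights remain smooth, whereas you keep the $\min$ of two thresholds and hence piecewise-linear integration limits --- which works, provided the one-variable estimate is applied to piecewise-smooth weights of bounded variation, the point you rightly flag as the main technical issue.
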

\begin{proof}
We write
\begin{align*}
\sum_{\substack{m_1n_1,m_2n_2\leq y\\m_1m_2,n_1n_2\leq y}}&=\sum_{m_2\leq n_1\leq y}\sum_{m_1,n_2\leq y/n_1}+\sum_{n_1\leq m_2\leq y}\sum_{m_1,n_2\leq y/m_2}+O\big((\log y)^2\big)\\
&=A_1+A_2+O\big((\log y)^2\big),
\end{align*}
say.
By Lemma 4.4 in [\textbf{\ref{BCY}}] we have
\begin{align}\label{resultBCY}
\sum_{n\leq y_1}\frac{1}{n}f\Big(\frac{\log y_1/n}{\log y_1}\Big)g\Big(\frac{\log y_2/n}{\log y_2}\Big)=(\log y_1)\int_0^1f(1-t)g\Big(1-\frac{t\log y_1}{\log y_2}\Big)dt+O(1),
\end{align}
if $y_1\leq y_2$ and $f,g$ are smooth functions. So if $n_1\geq m_2$, then 
\begin{align*}
&\sum_{\substack{m_1\leq y/n_1}}\frac{1}{m_1}f_1\Big(\frac{\log y/m_1n_1}{\log y}\Big)f_3\Big(\frac{\log y/m_1m_2}{\log y}\Big)\\
&\qquad=\Big(\log \frac{y}{n_1}\Big)\int_0^1f_1\Big(\frac{(1-t)\log y/n_1}{\log y}\Big)f_3\Big(\frac{(1-t)\log y/n_1}{\log y}+\frac{\log n_1/m_2}{\log y}\Big)dt+O(1).
\end{align*}
A similar expression holds for the sum over $n_2$ and hence
\begin{align*}
&A_1=\int_0^1\int_0^1\sum_{n_1\leq y}\frac{1}{n_1}\Big(\log \frac{y}{n_1}\Big)^2f_1\Big(\frac{(1-t_1)\log y/n_1}{\log y}\Big)f_4\Big(\frac{(1-t_4)\log y/n_1}{\log y}\Big)\\
&\quad\times \sum_{m_2\leq n_1}\frac{1}{m_2}f_3\Big(\frac{(1-t_1)\log y/n_1}{\log y}+\frac{\log n_1/m_2}{\log y}\Big)f_2\Big(\frac{(1-t_4)\log y/n_1}{\log y}+\frac{\log n_1/m_2}{\log y}\Big)dt_1dt_4\\
&\quad+O\big((\log y)^3\big).
\end{align*}
An application of \eqref{resultBCY} to the above sum over $m_2$, followed by another one to the sum over $n_1$ leads to
\begin{align*}
A_1&=(\log y)^4\int_0^1\int_0^1\int_0^1\int_0^1t_3(1-t_3)^2f_1\big((1-t_1)(1-t_3)\big)f_4\big((1-t_4)(1-t_3)\big)\\
&\qquad\times f_3\big((1-t_1)(1-t_3)+(1-t_2)t_3\big)f_2\big((1-t_4)(1-t_3)+(1-t_2)t_3\big)dt_1dt_2dt_3dt_4\\
&\qquad+O\big((\log y)^3\big)\\
&=(\log y)^4\int_0^1\int_0^{1-t_3}\int_0^{t_3}\int_0^{1-t_3}f_1(1-t_1-t_3)f_4(1-t_3-t_4)\\
&\qquad\times f_3(1-t_1-t_2)f_2(1-t_2-t_4)dt_1dt_2dt_4dt_3+O\big((\log y)^3\big),
\end{align*}
after some changes of variables. We obtain a similar expression for $A_2$ and the result hence follows.
\end{proof}

\begin{lemma}
\label{logsave}
Suppose $-1 \leq \sigma \leq 0$.  Then
\begin{equation*}
\sum_{n \leq y} \frac{1}{n} \Big(\frac{y}{n}\Big)^{\sigma}  \ll \min\big\{|\sigma|^{-1}, \log y\big\}.
\end{equation*}
\end{lemma}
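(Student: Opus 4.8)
The plan is to treat the two quantities in the minimum separately, since the claimed bound $\ll \min\{|\sigma|^{-1}, \log y\}$ merely asserts that the sum is dominated by each of $\log y$ and $|\sigma|^{-1}$ up to an absolute constant. Throughout I would rewrite the summand as
\[
\frac1n\Big(\frac yn\Big)^{\sigma} = \frac{y^{\sigma}}{n^{1+\sigma}},
\]
and observe that the hypothesis $-1\le\sigma\le 0$ is exactly what forces $0\le 1+\sigma\le 1$, so that $n^{-(1+\sigma)}$ is non-increasing in $n$ and hence amenable to comparison with an integral.

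For the bound $\ll\log y$, I would simply note that $\sigma\le 0$ together with $n\le y$ gives $(y/n)^{\sigma}\le 1$, whence
\[
\sum_{n\le y}\frac1n\Big(\frac yn\Big)^{\sigma} \le \sum_{n\le y}\frac1n \ll \log y.
\]

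For the bound $\ll|\sigma|^{-1}$, which is the only part requiring any work, I would use the monotonicity of $n^{-(1+\sigma)}$ to write $\sum_{n\le y}n^{-(1+\sigma)}\le 1+\int_1^y x^{-(1+\sigma)}\,dx$, evaluate the elementary integral as $(y^{|\sigma|}-1)/|\sigma|$ (recalling that $-\sigma=|\sigma|$ on this range), and then multiply through by $y^{\sigma}=y^{-|\sigma|}$. This yields
\[
\sum_{n\le y}\frac1n\Big(\frac yn\Big)^{\sigma} \le y^{-|\sigma|}+\frac{1-y^{-|\sigma|}}{|\sigma|}\le 1+\frac1{|\sigma|},
\]
where I have used $y^{-|\sigma|}\le 1$ to bound both terms. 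Since $|\sigma|\le 1$ gives $1\le|\sigma|^{-1}$, the right-hand side is $\le 2|\sigma|^{-1}$, as required.

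I do not anticipate any genuine obstacle: the only point to watch is the edge case $\sigma=0$, where the integral evaluation degenerates and $|\sigma|^{-1}$ is infinite. But this is precisely the range where the $\log y$ bound takes over, so the minimum is covered without a separate argument. It is worth emphasising that the resulting estimate is uniform in $\sigma$ throughout $[-1,0]$, which is what the later applications will require.
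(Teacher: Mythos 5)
Your argument is correct and complete. Note, however, that the paper does not actually prove this lemma: it simply cites [BCY, Lemma 4.6], so there is no in-paper argument to compare against. Your self-contained proof is the standard one and does exactly what is needed: the $\log y$ bound follows trivially from $(y/n)^{\sigma}\le 1$, and the $|\sigma|^{-1}$ bound follows from comparing $\sum_{n\le y}n^{-(1+\sigma)}$ with $\int_1^y x^{-(1+\sigma)}\,dx$ and multiplying back by $y^{\sigma}$; your handling of the endpoints $\sigma=0$ (where the $\log y$ term of the minimum takes over) and $\sigma=-1$ (where the integrand is constant but the computation still goes through) is sound, and the resulting constant $2$ is uniform on $[-1,0]$ as required. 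The only pedantic caveat, which is a feature of the lemma's statement rather than of your proof, is that the bound $\ll\log y$ cannot hold with an absolute constant as $y\to 1^{+}$; this is immaterial here since $y=T^{\vartheta}$ is large in every application.
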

\begin{proof}
See [\textbf{\ref{BCY}}; Lemma 4.6], for instance.
\end{proof}

\section{Evaluate $\mathcal{S}_{k,\ell}$}\label{sectionSkl}

We need the following mollified second moment of the Riemann zeta-function [\textbf{\ref{C}}; Theorem 2] (see also [\textbf{\ref{CS}}; Theorem 5.1]).

\begin{proposition}\label{mollifiedsecond}
Let $P$, $Q_1$ and $Q_2$ be polynomials with $P(0)=0$. Then for any $\vartheta < 4/7$ we have
\begin{align*}
&\frac{1}{T}\int_{T}^{2T}Q_1\Big(\frac{1}{\log T}\frac{d}{d\alpha}\Big)Q_2\Big(\frac{1}{\log T}\frac{d}{d\beta}\Big)\zeta(\tfrac12+\alpha+it)\zeta(\tfrac12+\beta-it)|M(\tfrac12+it)|^2dt\bigg|_{\alpha=\beta=0}\\
&\qquad\qquad=Q_1(0)Q_2(0)P(1)^2+\frac{1}{\vartheta}\int_0^1\int_0^1\Big(Q_1(-t_1)P'(t_2)-\vartheta Q_1'(-t_1)P(t_2)\Big)\\
&\qquad\qquad\qquad\qquad\times\Big(Q_2(-t_1)P'(t_2)-\vartheta Q_2'(-t_1)P(t_2)\Big)dt_1dt_2+O\big((\log T)^{-1}\big).
\end{align*}
\end{proposition}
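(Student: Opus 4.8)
The plan is to reduce this mollified shifted second moment to the \emph{twisted} second moment of the zeta-function and then carry out the arithmetic sum over the mollifier coefficients; this is, in effect, the content of Conrey's Theorem~2 [\textbf{\ref{C}}] and of [\textbf{\ref{CS}}; Theorem 5.1], so the honest route is to reproduce their reduction. First I would open the square $|M(\tfrac12+it)|^2=M(\tfrac12+it)M(\tfrac12-it)$ and interchange summation and integration, so that the left-hand side becomes
\[
\sum_{h,k\leq y}\frac{\mu(h)\mu(k)}{\sqrt{hk}}P\Big(\frac{\log y/h}{\log y}\Big)P\Big(\frac{\log y/k}{\log y}\Big)\,Q_1\Big(\frac{1}{\log T}\frac{d}{d\alpha}\Big)Q_2\Big(\frac{1}{\log T}\frac{d}{d\beta}\Big)I_{h,k}(\alpha,\beta)\Big|_{\alpha=\beta=0},
\]
where $I_{h,k}(\alpha,\beta)=\int_T^{2T}\zeta(\tfrac12+\alpha+it)\zeta(\tfrac12+\beta-it)(h/k)^{-it}\,dt$ is the twisted shifted second moment. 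The presence of the differential operators $Q_1,Q_2$ reflects the standard device of encoding combinations of $\zeta^{(j)}$ through the shifts $\alpha,\beta$, so that once $I_{h,k}$ is known as a function of the shifts, all the required derivatives follow by differentiating the asymptotic formula.

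Second, I would input the asymptotic formula for $I_{h,k}(\alpha,\beta)$. Writing $g=(h,k)$, $h=gh_0$, $k=gk_0$ with $(h_0,k_0)=1$, the twisted second moment has a main term consisting of two pieces: a \emph{diagonal} piece proportional to $\zeta(1+\alpha+\beta)$ times a multiplicative factor in $h_0,k_0$, and a \emph{swapped} piece arising from the functional equation, proportional to $(t/2\pi)^{-\alpha-\beta}\zeta(1-\alpha-\beta)$ times the conjugate arithmetic factor. This is where the restriction $\vartheta<4/7$ enters: it is precisely the range in which the off-diagonal and error contributions to the twisted second moment are controlled, via the Kloosterman-sum input underlying the work of Conrey and of Deshouillers--Iwaniec, so I would quote that formula rather than reprove it.

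Third, I would assemble the two main-term pieces into sums over $h,k\leq y$. The arithmetic factors combine into multiplicative functions whose Dirichlet series are, up to entire factors, powers of $\zeta(1+\cdots)$; evaluating the resulting $h,k$-sums against the polynomial weights $P$ is the analytic heart of the computation. I would carry this out either by contour integration, isolating the pole at $1+\alpha+\beta=1$ from the smooth background, or by the elementary summation identities of Balasubramanian--Conrey--Young used elsewhere in this paper (the analogue of \eqref{resultBCY}), with Lemma~\ref{logsave} bounding the tails where the shift is held away from the pole. After the changes of variables $h=y^{1-t_i}$, etc., the two sums produce the double integral over $[0,1]^2$, and it is the differentiation of the factor $(t/2\pi)^{-\alpha-\beta}$ and of the $\zeta$-factors with respect to $\alpha,\beta$ that generates the characteristic $P'$ versus $\vartheta P$ dichotomy in the integrand.

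Finally, I would apply the operators $Q_1(\tfrac{1}{\log T}\tfrac{d}{d\alpha})$ and $Q_2(\tfrac{1}{\log T}\tfrac{d}{d\beta})$ and set $\alpha=\beta=0$: each $\alpha$-derivative brings down either a $\log T$-factor (from the $t^{-\alpha}$ and $y$-powers, normalised by the $1/\log T$) registered through $Q_1(-t_1)$, or a factor acting on the weight $P$, which together account for the two terms $Q_1(-t_1)P'(t_2)$ and $-\vartheta Q_1'(-t_1)P(t_2)$, while the diagonal contribution with no surviving pole yields the clean term $Q_1(0)Q_2(0)P(1)^2$. The main obstacle throughout is the bookkeeping: tracking how the swapped term's $(t/2\pi)^{-\alpha-\beta}$ interacts with the differential operators so that the signs and the prefactor $1/\vartheta$ emerge correctly, all while keeping every off-diagonal contribution inside the $O((\log T)^{-1})$ error, which is possible only in the stated range $\vartheta<4/7$.
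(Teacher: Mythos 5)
The paper gives no proof of this proposition; it is quoted directly from Conrey [\textbf{\ref{C}}; Theorem 2] (see also [\textbf{\ref{CS}}; Theorem 5.1]), and your outline is a faithful sketch of exactly that source: the reduction to the twisted shifted second moment, the diagonal and swapped main terms, the Kloosterman-sum input behind $\vartheta<4/7$, and the differentiation in the shifts producing the $Q(-t_1)P'(t_2)-\vartheta Q'(-t_1)P(t_2)$ structure. This matches the paper's (cited) approach, so there is nothing to correct.
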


Note that
\begin{equation}\label{thetaprop}
\theta'(t)=\frac{\log T}{2}+O(1)\qquad\text{and}\qquad \theta^{(k)}(t)\ll_k T^{-k+1}
\end{equation}
for $t\asymp T$ and any $k\geq 2$. So
\begin{align}\label{Z'tformula}
Z^{(k)}(t)&=i^ke^{i\theta(t)}\sum_{j=0}^{k}\binom{k}{j}\Big(\frac{\log T}{2}\Big)^j\zeta^{k-j}(\tfrac12+it)+O\bigg(\sum_{j=0}^{k}(\log T)^{j-1}|\zeta^{k-j}(\tfrac12+it)|\bigg)\nonumber\\
&=i^ke^{i\theta(t)}(\log T)^kR_k\Big(\frac{1}{\log T}\frac{d}{d\alpha}\Big)\zeta(\tfrac{1}{2}+\alpha+it)\Big|_{\alpha=0}\\
&\qquad\qquad\qquad+O\bigg(\sum_{j=0}^{k}(\log T)^{j-1}|\zeta^{k-j}(\tfrac12+it)|\bigg),\nonumber
\end{align}
where 
\[
R_k(x)=\Big(\frac12+x\Big)^k.
\]
By Proposition \ref{mollifiedsecond} and Cauchy-Schwarz's inequality, the contribution of the $O$-terms to $\mathcal{S}_{k,\ell}$ is $O(T(\log T)^{k+\ell-1})$.
Hence, by noting that $Z^{(\ell)}(t)=(-1)^\ell Z^{(\ell)}(-t)$,
\begin{align*}
\mathcal{S}_{k,\ell}&=(-1)^\ell i^{k+\ell}(\log T)^{k+\ell}\int_{T}^{2T}R_k\Big(\frac{1}{\log T}\frac{d}{d\alpha}\Big)R_\ell\Big(\frac{1}{\log T}\frac{d}{d\beta}\Big)\\
&\qquad\qquad\zeta(\tfrac12+\alpha+it)\zeta(\tfrac12+\beta-it)|M(\tfrac12+it)|^2dt\bigg|_{\alpha=\beta=0}+O\big(T(\log T)^{k+\ell-1}\big).
\end{align*}
Using Proposition \ref{mollifiedsecond} again, the integral above is 
\begin{align*}
&T\bigg(\frac1{2^{k+\ell}}+\frac{1}{\vartheta}\int_0^1\int_{-1/2}^{1/2}\Big(t_1^kP'(t_2)-\vartheta k t_1^{k-1}P(t_2)\Big)\Big(t_1^\ell P'(t_2)-\vartheta \ell t_1^{\ell-1}P(t_2)\Big)dt_1dt_2\bigg)\\
&\qquad\qquad+O\big(T(\log T)^{-1}\big),
\end{align*}
by a change of variables $t_1\longrightarrow 1/2-t_1$. In the case $k+\ell=2s$ is even, this simplifies to
\begin{align*}
&T\bigg(\frac1{4^{s}}+\frac{2}{\vartheta}\int_0^1\int_{0}^{1/2}\Big(t_1^{2s}P'(t_2)^2+\vartheta^2 k\ell t_1^{2s-2}P(t_2)^2\Big)dt_1dt_2\bigg)+O\big(T(\log T)^{-1}\big)\\
&\qquad=\frac{T}{4^{s}}\bigg(1+\frac{1}{\vartheta(2s+1)}\int_0^1P'(t)^2dt+\frac{4\vartheta k\ell}{2s-1}\int_0^1P(t)^2dt\bigg)+O\big(T(\log T)^{-1}\big).
\end{align*}
Thus,
\begin{align}\label{asympS1}
\mathcal{S}_{k,\ell}&=\frac{(-1)^{\ell+s}}{4^{s}}T(\log T)^{2s}\bigg(1+\frac{1}{\vartheta(2s+1)}\int_0^1P'(t)^2dt+\frac{4\vartheta k\ell}{2s-1}\int_0^1P(t)^2dt\bigg)\nonumber\\
&\qquad\qquad+O\big(T(\log T)^{2s-1}\big).
\end{align}

\section{Shifted mollified fourth moment of the Riemann zeta-function}\label{sectionfourth}

We shall need the following twisted fourth moment of the Riemann zeta-function [\textbf{\ref{BBLR}}; Theorem 1.2]. Throughout this section, we let $w(t)$ be a smooth function with support in $[1,2]$ and satisfying $w^{(j)}(t)\ll_j T^\varepsilon$ for any $j\geq 0$.

\begin{theorem}[Bettin, Bui, Li and Radziwi\l\l]\label{BB}
Let $G(s)$ be an even entire function of rapid decay in any fixed strip $|\emph{Re}(s)|\leq C$ satisfying $G(0)=1$, and let
\begin{equation}\label{Vx}
W(x)=\frac{1}{2\pi i}\int_{(1)}G(s)(2\pi)^{-2s}x^{-s}\frac{ds}{s}.
\end{equation}
Then we have
\begin{align*}
&\sum_{m,n\leq y}\frac{a_m\overline{a_n}}{\sqrt{mn}}\int_{-\infty}^{\infty}\zeta(\tfrac{1}{2}+\alpha_1+it)\zeta(\tfrac{1}{2}+\alpha_2+it)\zeta(\tfrac{1}{2}+\beta_1-it)\zeta(\tfrac{1}{2}+\beta_2-it)\Big(\frac{m}{n}\Big)^{-it}w\Big(\frac{t}{T}\Big)dt\\
&\quad=\sum_{m,n\leq y}\frac{a_m\overline{a_n}}{\sqrt{mn}}\int_{-\infty}^{\infty}w\Big(\frac{t}{T}\Big)\bigg\{Z_{\alpha_1,\alpha_2,\beta_1,\beta_2,m,n}(t)+\Big(\frac{t}{2\pi}\Big)^{-(\alpha_1+\beta_1)}Z_{-\beta_1,\alpha_2,-\alpha_1,\beta_2,m,n}(t)\\
&\quad\quad+\Big(\frac{t}{2\pi}\Big)^{-(\alpha_1+\beta_2)}Z_{-\beta_2,\alpha_2,\beta_1,-\alpha_1,m,n}(t)+\Big(\frac{t}{2\pi}\Big)^{-(\alpha_2+\beta_1)}Z_{\alpha_1,-\beta_1,-\alpha_2,\beta_2,m,n}(t)\\
&\quad\quad+\Big(\frac{t}{2\pi}\Big)^{-(\alpha_2+\beta_2)}Z_{\alpha_1,-\beta_2,\beta_1,-\alpha_2,m,n}(t)+\Big(\frac{t}{2\pi}\Big)^{-(\alpha_1+\alpha_2+\beta_1+\beta_2)}Z_{-\beta_1,-\beta_2,-\alpha_1,-\alpha_2,m,n}(t)\bigg\}dt\\
&\quad\quad+O_\varepsilon(T^{1/2+2\vartheta+\varepsilon}+T^{3/4+\vartheta+\varepsilon})
\end{align*}
uniformly for $\alpha_1,\alpha_2,\beta_1,\beta_2\ll (\log T)^{-1}$, where 
\[
Z_{\alpha_1,\alpha_2,\beta_1,\beta_2,m,n}(t)=\sum_{ma_1a_2=nb_1b_2}\frac{1}{a_{1}^{1/2+\alpha_1}a_{2}^{1/2+\alpha_2}b_{1}^{1/2+\beta_1}b_{2}^{1/2+\beta_2}}W\Big(\frac{a_1a_2b_1b_2}{t^2}\Big).
\]
\end{theorem}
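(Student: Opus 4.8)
The plan is to prove this twisted fourth moment by the ``recipe plus shifted convolution'' method pioneered by Motohashi and refined by Young and by Hughes--Young, carrying the mollifier variables $m,n$ and the four shifts $\alpha_1,\alpha_2,\beta_1,\beta_2$ uniformly throughout. A preliminary reduction lets me assume $\mathrm{Re}(\alpha_j),\mathrm{Re}(\beta_j)$ are bounded away from zero, say $\asymp 1$: both sides of the identity are holomorphic in the four shifts in a polydisc of radius $\asymp(\log T)^{-1}$, so the required asymptotic in the range $\alpha_j,\beta_j\ll(\log T)^{-1}$ follows from the case of large real parts by analytic continuation together with Cauchy's integral formula and the maximum principle.

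Next I would apply a smoothed approximate functional equation, built from the Mellin kernel $G(s)$, to the product of the four zeta factors. Writing the two ``$+it$'' factors as a generalized divisor sum,
\[
\zeta(\tfrac12+\alpha_1+it)\zeta(\tfrac12+\alpha_2+it)=\sum_{a_1,a_2}\frac{1}{a_1^{1/2+\alpha_1+it}a_2^{1/2+\alpha_2+it}},
\]
and similarly the conjugate pair over $b_1,b_2$, the functional equation supplies, for each pair, a dual sum with the shifts negated and swapped and an archimedean factor whose leading behaviour is $(t/2\pi)^{-(\alpha_i+\beta_j)}$. The smoothing truncates the variables at the analytic conductor $a_1a_2b_1b_2\asymp t^2$, which is the source of the weight $W(a_1a_2b_1b_2/t^2)$ appearing in the functions $Z_{\dots}(t)$. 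After combining the twist $(m/n)^{-it}$ with $a_1^{-it}a_2^{-it}b_1^{it}b_2^{it}$ into the single oscillating factor $(ma_1a_2/nb_1b_2)^{-it}$ and performing the $t$-integration, the compact support and smoothness of $w$ force $ma_1a_2$ and $nb_1b_2$ to agree up to $O((ma_1a_2)T^{-1+\varepsilon})$.

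The exact diagonal $ma_1a_2=nb_1b_2$ contributes $\int w(t/T)\,Z_{\alpha_1,\alpha_2,\beta_1,\beta_2,m,n}(t)\,dt$ directly, and the various dual pieces contribute, on their diagonals, the five remaining terms: the single swaps $(\alpha_i,\beta_j)\mapsto(-\beta_j,-\alpha_i)$ produce the four terms weighted by $(t/2\pi)^{-(\alpha_i+\beta_j)}$, and the simultaneous swap of both pairs produces $(t/2\pi)^{-(\alpha_1+\alpha_2+\beta_1+\beta_2)}Z_{-\beta_1,-\beta_2,-\alpha_1,-\alpha_2,m,n}(t)$. This is exactly the CFKRS structure: one term for each choice of equal-size subsets of $\{\alpha_1,\alpha_2\}$ and $\{\beta_1,\beta_2\}$ to interchange, giving $1+4+1=6$ terms. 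A point requiring care is that the near-diagonal (rather than exactly diagonal) contributions, which are shifted divisor correlations, also carry main terms that must be evaluated and shown to reassemble into these same six $Z$-functions, so that only a genuine remainder is left over.

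The off-diagonal terms $ma_1a_2=nb_1b_2+h$ with $h\neq0$ small are the main obstacle. These are shifted convolution sums for the divisor function twisted by the moduli $m,n$, i.e.\ a quadratic divisor problem, and they must be evaluated uniformly in the shifts and in $m,n$ up to $y=T^\vartheta$. I would treat them either by the $\delta$-method of Duke--Friedlander--Iwaniec, separating the $a$- and $b$-variables, opening the resulting congruences into complete exponential sums, and bounding the ensuing Kloosterman sums by Weil's estimate combined with the spectral large sieve; or by Motohashi's spectral route through the Kuznetsov trace formula and bounds for shifted convolution sums. The delicate part, and the origin of the stated error term $T^{1/2+2\vartheta+\varepsilon}+T^{3/4+\vartheta+\varepsilon}$, is to keep the dependence on $m,n$ of size up to $T^\vartheta$ optimal: the term $T^{1/2+2\vartheta+\varepsilon}$ reflects the ranges of the twist moduli entering essentially diagonally, while $T^{3/4+\vartheta+\varepsilon}$ comes from the genuinely off-diagonal Kloosterman/spectral contribution. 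Obtaining these savings uniformly, rather than the weaker bounds that follow from the classical additive divisor theorem, is the technical heart of the argument.
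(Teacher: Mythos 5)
The paper does not prove this statement: it is quoted from Bettin--Bui--Li--Radziwi\l\l{} [\textbf{\ref{BBLR}}; Theorem 1.2], so there is no internal proof to compare your attempt against. That said, your outline is a fair reconstruction of how that reference actually proceeds: an approximate functional equation built from the kernel $G(s)$ (whence the weight $W(a_1a_2b_1b_2/t^2)$), the $1+4+1=6$ shift-swap terms of the CFKRS recipe arising on the diagonal $ma_1a_2=nb_1b_2$, and a quadratic divisor problem --- shifted convolutions of divisor functions twisted by the mollifier moduli --- for the off-diagonal, treated by a $\delta$-method leading to Kloosterman sums and Weil's bound. The title of the cited paper is precisely ``A quadratic divisor problem and moments of the Riemann zeta-function'', and the two error terms do arise from the sources you indicate.

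Two caveats. First, your preliminary reduction to shifts with real parts $\asymp 1$ is not right: the asymptotic with error $O_\varepsilon(T^{1/2+2\vartheta+\varepsilon}+T^{3/4+\vartheta+\varepsilon})$ must be established directly for shifts $\ll(\log T)^{-1}$; the legitimate version of this manoeuvre is to work in annuli with $|\alpha_i+\beta_j|\gg(\log T)^{-1}$ and then remove that restriction by the maximum modulus principle on a polydisc of radius $\asymp(\log T)^{-1}$, exactly as is done in the proof of Proposition \ref{mollifiedfourth}. For shifts of genuine size $1$ both sides of the identity are in a different regime and the claimed error bound is not available there, so nothing can be continued back from that range. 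Second, and more importantly, what you have written is a roadmap rather than a proof: the entire content of the theorem is the uniform, power-saving evaluation of the twisted shifted divisor correlations for moduli $m,n\le y=T^{\vartheta}$, together with the verification that the near-diagonal main terms reassemble into the six $Z$-functions. You correctly identify this as the technical heart, but you do not carry it out, so the stated error term is asserted rather than derived.
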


\subsection{Shifted mollified fourth moment}

Using Theorem \ref{BB} we can prove the following shifted mollified fourth moment of the Riemann zeta-function.

\begin{proposition}\label{mollifiedfourth}
Let $P$ be a polynomial with $P(0)=P'(0)=0$. Then for any $\vartheta <1/8$ we have
 \begin{align*}
&\int_{T}^{2T}\zeta(\tfrac12+\alpha_1+it)\zeta(\tfrac12+\alpha_2+it)\zeta(\tfrac12+\beta_1-it)\zeta(\tfrac12+\beta_2-it)|M(\tfrac12+it)|^4dt\\
&\quad=\frac{T}{2\vartheta^4}\frac{d^8}{\prod_{1\leq j\leq 4}dx_jdz_j}\int_0^1\int_0^1\int_0^1\int_0^1 \big(1+\vartheta\sum x_j\big)\big(1+\vartheta\sum z_j\big)\nonumber\\
&\qquad\times\Big(u_1-u_2+\vartheta\big(-x_1-x_2+z_1+z_2+u_1\sum x_j-u_2\sum z_j\big)\Big)\\
&\qquad\times\Big(u_1-u_2+\vartheta\big(-x_3-x_4+z_3+z_4+u_1\sum x_j-u_2\sum z_j\big)\Big)\\
&\qquad\times y^{\alpha_1(x_1+x_2)+\alpha_2(z_1+z_2)+\beta_1(x_3+x_4)+\beta_2(z_3+z_4)}(Ty^{\sum x_j})^{-(\alpha_1+\beta_1)u_1}(Ty^{\sum z_j})^{-(\alpha_2+\beta_2)u_2}\\
&\qquad\times \Big(T^{u_1-u_2}y^{-x_1-x_2+z_1+z_2+u_1\sum x_j-u_2\sum z_j}\Big)^{(\alpha_1-\alpha_2)u_3}\\
&\qquad\times\Big(T^{u_1-u_2}y^{-x_3-x_4+z_3+z_4+u_1\sum x_j-u_2\sum z_j}\Big)^{(\beta_1-\beta_2)u_4}\\
&\qquad\times\iiiint_{\substack{t_1+t_3,t_2+t_4\leq 1\\t_1+t_2,t_3+t_4\leq 1}} P(1-t_1-t_3+x_1+z_1)P(1-t_2-t_4+x_2+z_2)\\
&\qquad\times P(1-t_1-t_2+x_3+z_3)P(1-t_3-t_4+x_4+z_4)dt_1dt_2dt_3dt_4du_1du_2du_3du_4\bigg|_{\underline{x}=\underline{z}=\underline{0}}\nonumber\\
&\qquad+O_\varepsilon\big(T(\log T)^{-1+\varepsilon}\big)
 \end{align*}
uniformly for $\alpha_1,\alpha_2,\beta_1,\beta_2\ll (\log T)^{-1}$.
\end{proposition}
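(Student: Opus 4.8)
The plan is to reduce the mollified fourth moment to the twisted fourth moment of Theorem \ref{BB} by opening the mollifier, and then to extract the main term from the resulting arithmetic sums. First I would write $|M(\tfrac12+it)|^4 = M(\tfrac12+it)^2\,\overline{M(\tfrac12+it)}^2$, so that both $M^2$ and $\overline{M}^2$ are Dirichlet polynomials of length $y^2 = T^{2\vartheta}$ with coefficients
\[
a_m = \sum_{m_1m_2=m}\mu(m_1)\mu(m_2)P\Big(\frac{\log y/m_1}{\log y}\Big)P\Big(\frac{\log y/m_2}{\log y}\Big), \qquad a_m \ll_\varepsilon m^\varepsilon.
\]
Replacing the sharp cutoff on $[T,2T]$ by a smooth weight $w$ supported in $[1,2]$ introduces a negligible error, and I would then apply Theorem \ref{BB} with $y$ replaced by $y^2$. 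Since the effective mollifier length is $T^{2\vartheta}$, the error term becomes $T^{1/2+4\vartheta+\varepsilon}+T^{3/4+2\vartheta+\varepsilon}$, which is $o(T)$ precisely in the stated range $\vartheta<1/8$; this is where the restriction on $\vartheta$ comes from.

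Next I would evaluate the six main terms produced by Theorem \ref{BB}. These correspond to the six ways of swapping an equal-size subset of $\{\alpha_1,\alpha_2\}$ with a subset of $\{-\beta_1,-\beta_2\}$, as in the usual recipe for moments of $\zeta$, and each carries a factor $(t/2\pi)^{-(\cdots)}$ together with a divisor sum $Z_{\ldots,m,n}(t)$. For a fixed swap I would insert the Mellin representation \eqref{Vx} of $W$ to open the cutoff $a_1a_2b_1b_2\le t^2$, and then factor the resulting Dirichlet series in the four shift variables over primes. Summing over $a_1,a_2,b_1,b_2$ against the squarefree $m_1,m_2,n_1,n_2$ yields a product of four zeta factors $\zeta(1+\alpha_i+\beta_j)$ times an absolutely convergent arithmetic (Euler) factor; moving the $s$-contour and collecting the residue at $s=0$ together with the poles of the zeta factors extracts the polar part, which is of size $T$ times a power of $\log T$.

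With the analytic part in hand I would carry out the sum over the four squarefree mollifier variables $m_1,m_2,n_1,n_2$. The M\"obius factors and the polynomials $P$ are conveniently encoded through the auxiliary variables $x_j,z_j$: writing $m_j^{-x_j}$, $n_j^{-z_j}$ and applying the operator $d^8/\prod_j dx_jdz_j$ at $\underline{x}=\underline{z}=\underline 0$ reproduces both the $\log$-weights coming from differentiating the Dirichlet series and the four shifted arguments $1-t_i-t_j+x_\bullet+z_\bullet$ of $P$. The fourfold sum itself is then evaluated by Lemma \ref{EM}, which converts it into the $(\log y)^4=(\vartheta\log T)^4$ main term with the $t_1,t_2,t_3,t_4$ integral of the four $P$-factors over the region $t_1+t_3,t_2+t_4,t_1+t_2,t_3+t_4\le 1$; Lemma \ref{logsave} handles the off-diagonal and remainder contributions, keeping them within the claimed error $O_\varepsilon(T(\log T)^{-1+\varepsilon})$. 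Finally, combining the six swap terms and performing the residue integrals recasts the answer as a single multiple integral: the parameters $u_1,u_2$ arise from the two outer shift-sums $\alpha_1+\beta_1$ and $\alpha_2+\beta_2$ (carried by the factors $(t/2\pi)^{-(\alpha_1+\beta_1)}$, $(t/2\pi)^{-(\alpha_2+\beta_2)}$ and the conductor $t^2$), while $u_3,u_4\in[0,1]$ come from symmetrizing the inner differences $\alpha_1-\alpha_2$ and $\beta_1-\beta_2$, which is what removes the apparent poles at coinciding shifts.

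I expect the main obstacle to be the bookkeeping in the second and fourth steps: correctly computing the arithmetic factor and, above all, organizing the six swap terms so that the $t^2$-cutoffs, the functional-equation factors $(t/2\pi)^{-(\cdots)}$, and the four zeta residues collapse into the clean $u_1,\ldots,u_4$ integral with the shifts appearing in exactly the positions shown. Matching the $d^8/\prod_j dx_jdz_j$ encoding of the mollifier against the $P$-arguments produced by Lemma \ref{EM}, uniformly in the shifts of size $(\log T)^{-1}$, is the delicate point; the analytic input and the error analysis are comparatively routine once Theorem \ref{BB} is in place.
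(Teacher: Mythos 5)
Your proposal follows essentially the same route as the paper's proof: open $|M|^4$ as a Dirichlet polynomial of length $y^2$, apply Theorem \ref{BB} (whence the error $T^{1/2+4\vartheta+\varepsilon}+T^{3/4+2\vartheta+\varepsilon}$ and the restriction $\vartheta<1/8$), extract the residue at $s=0$ for each of the six swap terms, evaluate the mollifier sums via the $x_j,z_j$ differentiation device together with Lemma \ref{EM} and Lemma \ref{logsave}, and finally symmetrize the six terms so that the integral formula $\frac{1-y^{-(\alpha+\beta)}}{\alpha+\beta}=(\log y)\int_0^1y^{-(\alpha+\beta)u}\,du$ produces $u_1,\ldots,u_4$ and removes the apparent poles at coinciding shifts. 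This matches the paper's argument in structure and in all the key inputs, so the proposal is correct.
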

\begin{proof}
We shall establish the result for the smoothed version of the shifted mollified fourth moment,
\begin{align*}
&I(\alpha_1,\alpha_2,\beta_1,\beta_2)\\
&\quad=\int_{-\infty}^{\infty}\zeta(\tfrac12+\alpha_1+it)\zeta(\tfrac12+\alpha_2+it)\zeta(\tfrac12+\beta_1-it)\zeta(\tfrac12+\beta_2-it)|M(\tfrac12+it)|^4w\Big(\frac{t}{T}\Big)dt.
\end{align*}
It is a standard exercise to deduce our proposition from that.

 Due to the holomorphy in $\alpha_i$ and $\beta_j$, it suffices by the maximum modulus principle to prove the proposition uniformly over any fixed annuli such that $|\alpha_i|,|\beta_j| \asymp (\log T)^{-1}$, $|\alpha_i + \beta_j| \gg (\log T)^{-1}$ for any $1\leq i,j\leq 2$.

It is convenient to prescribe certain conditions on the function $G(s)$ in Theorem \ref{BB}. To be precise, we assume that $G(s)$ vanishes at $s=-\frac{(\alpha_i+\beta_j)}{2}$ for $1\leq i,j\leq 2$. Provided that $\vartheta<1/8$, we write
\begin{equation}\label{firstestimate1}
I(\alpha_1,\alpha_2,\beta_1,\beta_2)=I_1+I_2+I_3+I_4+I_5+I_6+O_\varepsilon(T^{1-\varepsilon})
\end{equation}
correspondingly to the decomposition in Theorem \ref{BB}. We first consider $I_1$, which is equal to
\begin{align*}
&\sum_{m_1,m_2,n_1,n_2\leq y}\frac{\mu_(m_1)\mu(m_2)\mu(n_1)\mu(n_2)P\big(\frac{\log y/m_1}{\log y}\big)P\big(\frac{\log y/m_2}{\log y}\big) P\big(\frac{\log y/n_1}{\log y}\big)P\big(\frac{\log y/n_2}{\log y}\big) }{\sqrt{m_1m_2n_1n_2}} \\
&\qquad\qquad\times\sum_{m_1m_2a_1a_2=n_1n_2b_1b_2}\frac{1}{a_1^{1/2+\alpha_1}a_2^{1/2+\alpha_2}b_1^{1/2+\beta_1}b_2^{1/2+\beta_2}}\int_{-\infty}^{\infty}W\Big(\frac{a_1a_2b_1b_2}{t^2}\Big)w\Big(\frac{t}{T}\Big)dt.
\end{align*}
Note that for $P(x)=\sum_{j\geq2}c_jx^j$, we can write
\[
P\Big(\frac{\log y/n}{\log y}\Big)=\sum_{j\geq 2}\frac{c_jj!}{(\log y)^j}\frac{1}{2\pi i}\int_{(1)}\Big(\frac{y}{n}\Big)^u\frac{du}{u^{j+1}}.
\]
Combining with \eqref{Vx} we get
\begin{align}\label{Jformula}
&I_1=\sum_{k_1,k_2,\ell_1,\ell_2\geq2}\frac{c_{k_1}c_{k_2}c_{\ell_1}c_{\ell_2} k_1!k_2!\ell_1!\ell_2!}{(\log y)^{k_1+k_2+\ell_1+\ell_2}}\int_{-\infty}^{\infty}w\Big(\frac{t}{T}\Big)\frac{1}{(2\pi i)^5}\int_{(1)^5}G(s)\Big(\frac{t}{2\pi}\Big)^{2s}y^{u_1+u_2+v_1+v_2}\nonumber\\
&\quad\times\sum_{{m_1m_2a_1a_2=n_1n_2b_1b_2}}\frac{\mu_(m_1)\mu(m_2)\mu(n_1)\mu(n_2)}{m_1^{1/2+u_1}m_2^{1/2+u_2}n_1^{1/2+v_1}n_2^{1/2+v_2}a_1^{1/2+\alpha_1+s}a_2^{1/2+\alpha_2+s}b_1^{1/2+\beta_1+s}b_2^{1/2+\beta_2+s}}\nonumber\\
&\quad\times \frac{du_1}{u_1^{k_1+1}}\frac{du_2}{u_2^{k_2+1}}\frac{dv_1}{v_1^{\ell_1+1}}\frac{dv_2}{v_2^{\ell_2+1}}\frac{ds}{s}dt.
\end{align}
The arithmetical sum is
\begin{align}\label{I355}
&\sum_{{m_1m_2a_1a_2=n_1n_2b_1b_2}}\frac{\mu_(m_1)\mu(m_2)\mu(n_1)\mu(n_2)}{m_1^{1/2+u_1}m_2^{1/2+u_2}n_1^{1/2+v_1}n_2^{1/2+v_2}a_1^{1/2+\alpha_1+s}a_2^{1/2+\alpha_2+s}b_1^{1/2+\beta_1+s}b_2^{1/2+\beta_2+s}}\\
&\qquad=A(\alpha_1,\alpha_2,\beta_1,\beta_2,u_1,u_2,v_1,v_2,s)\frac{\prod_{1\leq i,j\leq 2}\zeta(1+\alpha_i+\beta_j+2s)\zeta(1+u_i+v_j)}{\prod_{1\leq i,j\leq 2}\zeta(1+\alpha_i+v_j+s)\zeta(1+\beta_i+u_j+s)},\nonumber
\end{align}
where $A(\alpha_1,\alpha_2,\beta_1,\beta_2,u_1,u_2,v_1,v_2,s)$ is an arithmetical factor converging absolutely in a product of half-planes containing the origin. 

We first move the $u_i$ and $v_j$ contours, $1\leq i,j\leq 2$, in \eqref{Jformula} to $\textrm{Re}(u_i)=\textrm{Re}(v_j)=\delta$, and then move the $s$ contour to $\textrm{Re}(s)=-\delta/2$, where
$\delta> 0$ is some fixed constant such that the arithmetical factor converges absolutely. In doing so, we only cross a simple pole at $s=0$. Note that the poles at $s=-\frac{(\alpha_i+\beta_j)}{2}$, $1\leq i,j\leq 2$, of the zeta-functions are cancelled out by the zeros of $G(s)$. On the new line we simply bound the integral by absolute values, giving the following contribution
\[
\ll_\varepsilon T^{1-\delta+\varepsilon}y^{4\delta}\ll_\varepsilon T^{1-\varepsilon}.
\]
Hence
\begin{align}\label{I340}
I_1&=\widehat{w}(0)T\prod_{1\leq i,j\leq 2}\zeta(1+\alpha_i+\beta_j)\sum_{k_1,k_2,\ell_1,\ell_2\geq2}\frac{c_{k_1}c_{k_2}c_{\ell_1}c_{\ell_2} k_1!k_2!\ell_1!\ell_2!}{(\log y)^{k_1+k_2+\ell_1+\ell_2}}\nonumber\\
&\qquad\times \frac{1}{(2\pi i)^4}\int_{(\delta)^4}y^{u_1+u_2+v_1+v_2}A(\alpha_1,\alpha_2,\beta_1,\beta_2,u_1,u_2,v_1,v_2,0)\\
&\qquad\qquad\times\frac{\prod_{1\leq i,j\leq 2}\zeta(1+u_i+v_j)}{\prod_{1\leq i,j\leq 2}\zeta(1+\alpha_i+v_j)\zeta(1+\beta_i+u_j)}\frac{du_1}{u_1^{k_1+1}}\frac{du_2}{u_2^{k_2+1}}\frac{dv_1}{v_1^{\ell_1+1}}\frac{dv_2}{v_2^{\ell_2+1}}+O_\varepsilon \big(T^{1-\varepsilon}\big).\nonumber
\end{align}

We now move the contours of integration to $\textrm{Re}(u_i)\asymp(\log T)^{-1}$, $\textrm{Re}(v_j)\asymp (\log T)^{-1}$, $1\leq i,j\leq 2$. Bounding the integrals trivially shows that $I_1\ll T$. Hence we can replace $A(\alpha_1,\alpha_2,\beta_1,\beta_2,u_1,u_2,v_1,v_2,0)$ by $A(\underline{0})$ in \eqref{I340} with an error of size $O(T(\log T)^{-1})$. By letting $\alpha_i=\beta_j=0$ and $u_i=v_j=s$, $1\leq i,j\leq 2$, in \eqref{I355} we get
\[
A(0,0,0,0,s,s,s,s,s)=\sum_{{m_1m_2a_1a_2=n_1n_2b_1b_2}}\frac{\mu_(m_1)\mu(m_2)\mu(n_1)\mu(n_2)}{(m_1m_2n_1n_2a_1a_2b_1b_2)^{1/2+s}}=1,
\]
and so $A(\underline{0})=1$, in particular.

We next use the Dirichlet series for $\zeta(1+u_i+v_j)$, $1\leq i,j\leq 2$, and reverse the order of summation and integration. The $u_i$ and $v_j$ variables in \eqref{I340} are now separated so that
\begin{align}\label{I356}
I_1&=\widehat{w}(0)T\prod_{1\leq i,j\leq 2}\zeta(1+\alpha_i+\beta_j)\sum_{\substack{m_1n_1,m_2n_2\leq y\\m_1m_2,n_1n_2\leq y}}\frac{1}{m_1m_2n_1n_2}\nonumber\\
&\qquad\times K_{\alpha_1,\alpha_2}(m_1n_1) K_{\alpha_1,\alpha_2}(m_2n_2) K_{\beta_1,\beta_2}(m_1m_2) K_{\beta_1,\beta_2}(n_1n_2)+O\big(T(\log T)^{-1}\big),
\end{align}
where
\[
K_{\alpha_1,\alpha_2}(n)=\sum_{j\geq 2}\frac{c_jj!}{(\log y)^j}\frac{1}{2\pi i}\int_{(\asymp (\log T)^{-1})}\Big(\frac{y}{n}\Big)^{u}\frac{1}{\zeta(1+\alpha_1+u)\zeta(1+\alpha_2+u)}\frac{du}{u^{j+1}}.
\]
Here we are able to restrict the sum over $m_i,n_j$ to $m_1n_1,m_2n_2,m_1m_2,n_1n_2\leq y$ by moving the $u_i,v_j$-integrals far to the right otherwise.  

The expression $K_{\alpha_1,\alpha_2}(n)$ is evaluated in [\textbf{\ref{BCY}}; Lemma 5.7, Lemma 6.1],
\begin{align*}
K_{\alpha_1,\alpha_2}(n)&=\frac{1}{(\log y)^2}\frac{d^2}{dx_1dz_1}y^{\alpha_1x_1+\alpha_2z_1}P\Big(x_1+z_1+\frac{\log y/n}{\log y}\Big)\bigg|_{x_1=z_1=0}\\
&\qquad\qquad+O\big((\log T)^{-3}\big)+O_\varepsilon\bigg(\Big(\frac{y}{n}\Big)^{-\nu}(\log T)^{-2+\varepsilon}\bigg),
\end{align*}
for some $\nu\asymp(\log\log y)^{-1}$. Putting this into \eqref{I356} we get
\begin{align*}
I_1&=\frac{\widehat{w}(0)T}{(\log y)^8}\prod_{1\leq i,j\leq 2}\zeta(1+\alpha_i+\beta_j)\frac{d^8}{\prod_{1\leq j\leq 4}dx_jdz_j}y^{\alpha_1(x_1+x_2)+\alpha_2(z_1+z_2)+\beta_1(x_3+x_4)+\beta_2(z_3+z_4)}
\\
&\qquad\times\sum_{\substack{m_1n_1,m_2n_2\leq y\\m_1m_2,n_1n_2\leq y}}\frac{1}{m_1m_2n_1n_2}P\Big(x_1+z_1+\frac{\log y/m_1n_1}{\log y}\Big)P\Big(x_2+z_2+\frac{\log y/m_2n_2}{\log y}\Big)\\
&\qquad\times P\Big(x_3+z_3+\frac{\log y/m_1m_2}{\log y}\Big)P\Big(x_4+z_4+\frac{\log y/n_1n_2}{\log y}\Big)\bigg|_{\underline{x}=\underline{z}=\underline{0}}+O_\varepsilon\big(T(\log T)^{-1+\varepsilon
}\big),
 \end{align*}
 by Lemma \ref{logsave}. Using Lemma \ref{EM} we then obtain that
 \begin{align}\label{secondestimate1}
 I_1&=\frac{\widehat{w}(0)T}{(\log y)^4}\prod_{1\leq i,j\leq 2}\zeta(1+\alpha_i+\beta_j)\frac{d^8}{\prod_{1\leq j\leq 4}dx_jdz_j}y^{\alpha_1(x_1+x_2)+\alpha_2(z_1+z_2)+\beta_1(x_3+x_4)+\beta_2(z_3+z_4)}\nonumber
\\
&\qquad\times\iiiint_{\substack{t_1+t_3,t_2+t_4\leq 1\\t_1+t_2,t_3+t_4\leq 1}} P(1-t_1-t_3+x_1+z_1)P(1-t_2-t_4+x_2+z_2)\\
&\qquad\qquad\qquad\times P(1-t_1-t_2+x_3+z_3)P(1-t_3-t_4+x_4+z_4)dt_1dt_2dt_3dt_4\bigg|_{\underline{x}=\underline{z}=\underline{0}}\nonumber\\
&\qquad+O_\varepsilon\big(T(\log T)^{-1+\varepsilon
}\big).\nonumber
 \end{align}
 
 Note that $I_2$ is essentially obtained by multiplying $I_1$ with $T^{-(\alpha_1+\beta_1)}$ and changing the shifts $\alpha_1\leftrightarrow-\beta_1$, $I_3$ is obtained by multiplying $I_1$ with $T^{-(\alpha_1+\beta_2)}$ and changing the shifts $\alpha_1\leftrightarrow-\beta_2$, $I_4$ is obtained by multiplying $I_1$ with $T^{-(\alpha_2+\beta_1)}$ and changing the shifts $\alpha_2\leftrightarrow-\beta_1$, $I_5$ is obtained by multiplying $I_1$ with $T^{-(\alpha_2+\beta_2)}$ and changing the shifts $\alpha_2\leftrightarrow-\beta_2$, and $I_6$ is obtained by multiplying $I_1$ with $T^{-(\alpha_1+\alpha_2+\beta_1+\beta_2)}$ and changing the shifts $\alpha_1\leftrightarrow-\beta_1$ and $\alpha_2\leftrightarrow-\beta_2$. Hence from \eqref{firstestimate1} and \eqref{secondestimate1} we obtain that
\begin{align}\label{finalI}
&I(\alpha_1,\alpha_2,\beta_1,\beta_2)=\frac{\widehat{w}(0)T}{(\log y)^4}\frac{d^8}{\prod_{1\leq j\leq 4}dx_jdz_j}U(\underline{x},\underline{z})\nonumber
\\
&\qquad\times\iiiint_{\substack{t_1+t_3,t_1+t_4\leq 1\\t_2+t_3,t_2+t_4\leq 1}} P(1-t_1-t_3+x_1+z_1)P(1-t_1-t_4+x_2+z_2)\\
&\qquad\qquad\qquad\quad\times P(1-t_2-t_3+x_3+z_3)P(1-t_2-t_4+x_4+z_4)dt_1dt_2dt_3dt_4\bigg|_{\underline{x}=\underline{z}=\underline{0}}\nonumber\\
&\qquad+O_\varepsilon\big(T(\log T)^{-1+\varepsilon
}\big),\nonumber
\end{align}
where
\begin{align*}
U(\underline{x},\underline{z})&=\frac{y^{\alpha_1(x_1+x_2)+\alpha_2(z_1+z_2)+\beta_1(x_3+x_4)+\beta_2(z_3+z_4)}}{(\alpha_1+\beta_1)(\alpha_1+\beta_2)(\alpha_2+\beta_1)(\alpha_2+\beta_2)}\\
&\qquad\qquad -\frac{T^{-(\alpha_1+\beta_1)}y^{-\beta_1(x_1+x_2)+\alpha_2(z_1+z_2)-\alpha_1(x_3+x_4)+\beta_2(z_3+z_4)}}{(\alpha_1+\beta_1)(-\beta_1+\beta_2)(\alpha_2-\alpha_1)(\alpha_2+\beta_2)}\\
&\qquad\qquad-\frac{T^{-(\alpha_1+\beta_2)}y^{-\beta_2(x_1+x_2)+\alpha_2(z_1+z_2)+\beta_1(x_3+x_4)-\alpha_1(z_3+z_4)}}{(-\beta_2+\beta_1)(\alpha_1+\beta_2)(\alpha_2+\beta_1)(\alpha_2-\alpha_1)}\\
&\qquad\qquad-\frac{T^{-(\alpha_2+\beta_1)}y^{\alpha_1(x_1+x_2)-\beta_1(z_1+z_2)-\alpha_2(x_3+x_4)+\beta_2(z_3+z_4)}}{(\alpha_1-\alpha_2)(\alpha_1+\beta_2)(\alpha_2+\beta_1)(-\beta_1+\beta_2)}\\
&\qquad\qquad-\frac{T^{-(\alpha_2+\beta_2)}y^{\alpha_1(x_1+x_2)-\beta_2(z_1+z_2)+\beta_1(x_3+x_4)-\alpha_2(z_3+z_4)}}{(\alpha_1+\beta_1)(\alpha_1-\alpha_2)(-\beta_2+\beta_1)(\alpha_2+\beta_2)}\\
&\qquad\qquad+\frac{T^{-(\alpha_1+\alpha_2+\beta_1+\beta_2)}y^{-\beta_1(x_1+x_2)-\beta_2(z_1+z_2)-\alpha_1(x_3+x_4)-\alpha_2(z_3+z_4)}}{(\alpha_1+\beta_1)(\beta_1+\alpha_2)(\beta_2+\alpha_1)(\alpha_2+\beta_2)}.
\end{align*}

We write
\begin{eqnarray*}
\frac{y^{\alpha_1(x_1+x_2)+\alpha_2(z_1+z_2)+\beta_1(x_3+x_4)+\beta_2(z_3+z_4)}}{(\alpha_1+\beta_1)(\alpha_1+\beta_2)(\alpha_2+\beta_1)(\alpha_2+\beta_2)}&=&\frac{y^{\alpha_1(x_1+x_2)+\alpha_2(z_1+z_2)+\beta_1(x_3+x_4)+\beta_2(z_3+z_4)}}{(\alpha_1+\beta_1)(-\beta_1+\beta_2)(\alpha_2-\alpha_1)(\alpha_2+\beta_2)}\\
&&\quad+\frac{y^{\alpha_1(x_1+x_2)+\alpha_2(z_1+z_2)+\beta_1(x_3+x_4)+\beta_2(z_3+z_4)}}{(-\beta_2+\beta_1)(\alpha_1+\beta_2)(\alpha_2+\beta_1)(\alpha_2-\alpha_1)}
\end{eqnarray*}
and
\begin{align}\label{swap1}
&\frac{T^{-(\alpha_1+\alpha_2+\beta_1+\beta_2)}y^{-\beta_1(x_1+x_2)-\beta_2(z_1+z_2)-\alpha_1(x_3+x_4)-\alpha_2(z_3+z_4)}}{(\alpha_1+\beta_1)(\beta_1+\alpha_2)(\beta_2+\alpha_1)(\alpha_2+\beta_2)}\nonumber\\
&\qquad\quad=\frac{T^{-(\alpha_1+\alpha_2+\beta_1+\beta_2)}y^{-\beta_1(x_1+x_2)-\beta_2(z_1+z_2)-\alpha_1(x_3+x_4)-\alpha_2(z_3+z_4)}}{(\alpha_1+\beta_1)(-\beta_1+\beta_2)(\alpha_2-\alpha_1)(\alpha_2+\beta_2)}\nonumber\\
&\qquad\qquad\qquad+\frac{T^{-(\alpha_1+\alpha_2+\beta_1+\beta_2)}y^{-\beta_1(x_1+x_2)-\beta_2(z_1+z_2)-\alpha_1(x_3+x_4)-\alpha_2(z_3+z_4)}}{(-\beta_2+\beta_1)(\alpha_1+\beta_2)(\alpha_2+\beta_1)(\alpha_2-\alpha_1)}.
\end{align}
Note that we can change the roles $x_j\leftrightarrow z_j$ for any $1\leq j\leq 4$ in any term of $U(\underline{x},\underline{z})$ without affecting the value of $I(\alpha_1,\alpha_2,\beta_1,\beta_2)$ in \eqref{finalI}. Applying all these changes to the last term in \eqref{swap1}, we can replace $U(\underline{x},\underline{z})$ with
\begin{eqnarray}\label{combin}
&&\frac{y^{\alpha_1(x_1+x_2)+\alpha_2(z_1+z_2)+\beta_1(x_3+x_4)+\beta_2(z_3+z_4)}}{(-\beta_1+\beta_2)(\alpha_2-\alpha_1)}\bigg(\frac{1-(Ty^{\sum x_j})^{-(\alpha_1+\beta_1)}}{\alpha_1+\beta_1}\bigg)\bigg(\frac{1-(Ty^{\sum z_j})^{-(\alpha_2+\beta_2)}}{\alpha_2+\beta_2}\bigg)\nonumber\\
&&\qquad-\frac{y^{\alpha_1(x_1+x_2)+\alpha_2(z_1+z_2)+\beta_1(x_3+x_4)+\beta_2(z_3+z_4)}}{(-\beta_1+\beta_2)(\alpha_2-\alpha_1)}\bigg(\frac{1-(Ty^{x_1+x_2+z_3+z_4})^{-(\alpha_1+\beta_2)}}{\alpha_1+\beta_2}\bigg)\\
&&\qquad\qquad\qquad\times\bigg(\frac{1-(Ty^{z_1+z_2+x_3+x_4})^{-(\alpha_2+\beta_1)}}{\alpha_2+\beta_1}\bigg).\nonumber
\end{eqnarray}
Using the integral formula
\begin{equation}\label{int}
\frac{1-y^{-(\alpha+\beta)}}{\alpha+\beta}=(\log y)\int_{0}^{1}y^{-(\alpha+\beta)u}du
\end{equation} we then get
\begin{align}\label{finalformulaI}
&I(\alpha_1,\alpha_2,\beta_1,\beta_2)=\frac{\widehat{w}(0)T}{\vartheta^2(\log y)^2}\frac{d^8}{\prod_{1\leq j\leq 4}dx_jdz_j}\int_0^1\int_0^1\frac{U_1(\underline{x},\underline{z},u_1,u_2)-U_2(\underline{x},\underline{z},u_1,u_2)}{(-\beta_1+\beta_2)(\alpha_2-\alpha_1)}\nonumber\\
&\qquad\times\iiiint_{\substack{t_1+t_3,t_1+t_4\leq 1\\t_2+t_3,t_2+t_4\leq 1}} P(1-t_1-t_3+x_1+z_1)P(1-t_1-t_4+x_2+z_2)\\
&\qquad\qquad\quad\times P(1-t_2-t_3+x_3+z_3)P(1-t_2-t_4+x_4+z_4)dt_1dt_2dt_3dt_4du_1du_2\bigg|_{\underline{x}=\underline{z}=\underline{0}}\nonumber\\
&\qquad+O_\varepsilon\big(T(\log T)^{-1+\varepsilon
}\big),\nonumber
\end{align}
where
\begin{align*}
U_1&=y^{\alpha_1(x_1+x_2)+\alpha_2(z_1+z_2)+\beta_1(x_3+x_4)+\beta_2(z_3+z_4)}(Ty^{\sum x_j})^{-(\alpha_1+\beta_1)u_1}(Ty^{\sum z_j})^{-(\alpha_2+\beta_2)u_2}\\
&\qquad\qquad\times\big(1+\vartheta\sum x_j\big)\big(1+\vartheta\sum z_j\big)
\end{align*}
and
\begin{align*}
&U_2= y^{\alpha_1(x_1+x_2)+\alpha_2(z_1+z_2)+\beta_1(x_3+x_4)+\beta_2(z_3+z_4)}(Ty^{x_1+x_2+z_3+z_4})^{-(\alpha_1+\beta_2)u_1}\\
&\ \qquad\times(Ty^{z_1+z_2+x_3+x_4})^{-(\alpha_2+\beta_1)u_2}\big(1+\vartheta(x_1+x_2+z_3+z_4)\big)\big(1+\vartheta(z_1+z_2+x_3+x_4)\big).
\end{align*}

Again note that $I(\alpha_1,\alpha_2,\beta_1,\beta_2)$ is unchanged if we swap any of the pairs of variables $x_j\leftrightarrow z_j$ for any $1\leq j\leq 4$ or $u_1\leftrightarrow u_2$ in $U_1$ or $U_2$. Hence we can replace the term $U_1-U_2$ in the integrand with
\begin{eqnarray*}
&&\frac{1}{2}\Big(U_1(\underline{x},\underline{z},u_1,u_2)-U_2(z_1,z_2,x_3,x_4,x_1,x_2,z_3,z_4,u_2,u_1)\\
&&\qquad\qquad-U_2(x_1,x_2,z_3,z_4,z_1,z_2,x_3,x_4,u_1,u_2)+U_1(\underline{z},\underline{x},u_2,u_1)\Big),
\end{eqnarray*}
which is
\begin{align*}
&\frac{\big(1+\vartheta\sum x_j\big)\big(1+\vartheta\sum z_j\big)}{2}\\
&\qquad\times\bigg(y^{\alpha_1(x_1+x_2)+\alpha_2(z_1+z_2)+\beta_1(x_3+x_4)+\beta_2(z_3+z_4)}(Ty^{\sum x_j})^{-(\alpha_1+\beta_1)u_1}(Ty^{\sum z_j})^{-(\alpha_2+\beta_2)u_2}\\
&\qquad\qquad\ \  -y^{\alpha_1(z_1+z_2)+\alpha_2(x_1+x_2)+\beta_1(x_3+x_4)+\beta_2(z_3+z_4)}(Ty^{\sum z_j})^{-(\alpha_1+\beta_2)u_2}(Ty^{\sum x_j})^{-(\alpha_2+\beta_1)u_1}\\
&\qquad\qquad\ \ - y^{\alpha_1(x_1+x_2)+\alpha_2(z_1+z_2)+\beta_1(z_3+z_4)+\beta_2(x_3+x_4)}(Ty^{\sum x_j})^{-(\alpha_1+\beta_2)u_1}(Ty^{\sum z_j})^{-(\alpha_2+\beta_1)u_2}\\
&\qquad\qquad\ \ +y^{\alpha_1(z_1+z_2)+\alpha_2(x_1+x_2)+\beta_1(z_3+z_4)+\beta_2(x_3+x_4)}(Ty^{\sum z_j})^{-(\alpha_1+\beta_1)u_2}(Ty^{\sum x_j})^{-(\alpha_2+\beta_2)u_1}\bigg)\\
&\ =\frac{\big(1+\vartheta\sum x_j\big)\big(1+\vartheta\sum z_j\big)}{2}y^{\alpha_1(x_1+x_2)+\alpha_2(z_1+z_2)+\beta_1(x_3+x_4)+\beta_2(z_3+z_4)}(Ty^{\sum x_j})^{-(\alpha_1+\beta_1)u_1}\\
&\qquad\times(Ty^{\sum z_j})^{-(\alpha_2+\beta_2)u_2}\bigg(1-\Big(T^{u_1-u_2}y^{-x_1-x_2+z_1+z_2+u_1\sum x_j-u_2\sum z_j}\Big)^{\alpha_1-\alpha_2}\bigg)\\
&\qquad\times\bigg(1-\Big(T^{u_1-u_2}y^{-x_3-x_4+z_3+z_4+u_1\sum x_j-u_2\sum z_j}\Big)^{\beta_1-\beta_2}\bigg).
\end{align*}
Using \eqref{int} again in \eqref{finalformulaI} and simplifying we obtain the proposition.
\end{proof}

\subsection{Shifted fourth moment}

The shifted fourth moment of $\zeta(s)$ is standard. The derivation is similar and easier to the above subsection so we omit the proof. 

\begin{proposition}\label{shiftedfourth}
Let
\begin{align*}
J(\alpha_1,\alpha_2,\beta_1,\beta_2)=\int_{T}^{2T}\zeta(\tfrac12+\alpha_1+it)\zeta(\tfrac12+\alpha_2+it)\zeta(\tfrac12+\beta_1-it)\zeta(\tfrac12+\beta_2-it)dt.
\end{align*}
Then we have
\begin{align*}
&J(\alpha_1,\alpha_2,\beta_1,\beta_2)=\frac{T(\log T)^4}{2\zeta(2)}\int_0^1\int_0^1\int_0^1\int_0^1(u_1-u_2)^2\\
&\ \times T^{-(\alpha_1+\beta_1)u_1}T^{-(\alpha_2+\beta_2)u_2}T^{(\alpha_1-\alpha_2)(u_1-u_2)u_3}T^{(\beta_1-\beta_2)(u_1-u_2)u_4}du_1du_2du_3du_4 +O\big(T(\log T)^3\big)
\end{align*}
uniformly for $\alpha_1,\alpha_2,\beta_1,\beta_2\ll (\log T)^{-1}$.
\end{proposition}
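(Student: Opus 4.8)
The plan is to follow the argument for Proposition \ref{mollifiedfourth} almost verbatim, specialised to the trivial mollifier. One first passes to the smoothed integral
\[
\tilde J(\alpha_1,\alpha_2,\beta_1,\beta_2)=\int_{-\infty}^\infty \zeta(\tfrac12+\alpha_1+it)\zeta(\tfrac12+\alpha_2+it)\zeta(\tfrac12+\beta_1-it)\zeta(\tfrac12+\beta_2-it)w\Big(\frac{t}{T}\Big)\,dt,
\]
from which the sharp-cutoff version over $[T,2T]$ follows in the standard way by choosing $w$ close to the indicator of $[1,2]$. One then applies Theorem \ref{BB} with the single coefficient $a_1=1$ and $a_n=0$ for $n>1$, so that $\tilde J$ splits as $J_1+\cdots+J_6+O_\varepsilon(T^{1-\varepsilon})$ corresponding to the six $Z$-terms; with no mollifier present the parameter $y$ disappears, so the error in Theorem \ref{BB} is simply $O_\varepsilon(T^{3/4+\varepsilon})$ and is negligible.

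For the principal term $J_1$ the arithmetic sum is now simply
\[
\sum_{a_1a_2=b_1b_2}\frac{1}{a_1^{1/2+\alpha_1+s}a_2^{1/2+\alpha_2+s}b_1^{1/2+\beta_1+s}b_2^{1/2+\beta_2+s}}=\frac{\prod_{1\leq i,j\leq 2}\zeta(1+\alpha_i+\beta_j+2s)}{\zeta(2+\alpha_1+\alpha_2+\beta_1+\beta_2+4s)},
\]
a standard divisor identity replacing the more elaborate \eqref{I355}. Inserting the Mellin representation \eqref{Vx} of $W$ and moving the single $s$-contour to $\mathrm{Re}(s)=-\delta$ for a small fixed $\delta>0$, one crosses only the simple pole at $s=0$ coming from the factor $1/s$; as in Proposition \ref{mollifiedfourth} we may take $G$ to vanish at $s=-(\alpha_i+\beta_j)/2$ so that the zeta-poles are cancelled. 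The shifted contour contributes $\ll_\varepsilon T^{1-\delta+\varepsilon}$, and the residue gives
\[
J_1=\widehat w(0)\,T\,\frac{\prod_{1\leq i,j\leq 2}\zeta(1+\alpha_i+\beta_j)}{\zeta(2+\alpha_1+\alpha_2+\beta_1+\beta_2)}+O_\varepsilon\big(T^{1-\varepsilon}\big),
\]
which is the source of the factor $1/\zeta(2)$ in the final formula.

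The remaining terms $J_2,\ldots,J_6$ are obtained from $J_1$ exactly as described after \eqref{secondestimate1}, by multiplying by the appropriate power $T^{-(\alpha_i+\beta_j)}$ (arising from $\int w(t/T)(t/2\pi)^{-(\alpha_i+\beta_j)}\,dt$) and swapping the corresponding shifts. Summing the six contributions produces a function of $\alpha_1,\alpha_2,\beta_1,\beta_2$ built from the reciprocals $(\alpha_i+\beta_j)^{-1}$, entirely analogous to $U(\underline x,\underline z)$ but without the mollifier variables and carrying the overall factor $1/\zeta(2+\alpha_1+\alpha_2+\beta_1+\beta_2)$. One then repeats the algebraic manipulation of the mollified proof: the reciprocals are linearised through \eqref{int} (now with $y$ replaced by $T$), converting each $\big(1-T^{-(\alpha_i+\beta_j)}\big)/(\alpha_i+\beta_j)$ into $(\log T)\int_0^1 T^{-(\alpha_i+\beta_j)u}\,du$, and the pairing of the six terms is symmetrised by swapping $\alpha_1\leftrightarrow\alpha_2$, $\beta_1\leftrightarrow\beta_2$ and $u_1\leftrightarrow u_2$. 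The first two applications of \eqref{int} yield $u_1,u_2$ with the factors $T^{-(\alpha_1+\beta_1)u_1}T^{-(\alpha_2+\beta_2)u_2}$; the last two, applied to $T^{(u_1-u_2)(\alpha_1-\alpha_2)}$ and $T^{(u_1-u_2)(\beta_1-\beta_2)}$, produce $u_3,u_4$ together with two factors $(u_1-u_2)\log T$, hence the $(u_1-u_2)^2$ and the full power $(\log T)^4$. The symmetrisation supplies the factor $1/2$, and with $1/\zeta(2)$ this gives the constant $1/(2\zeta(2))$.

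The only point requiring care is, as in the mollified case, the apparent poles along $\alpha_i+\beta_j=0$, $\alpha_1=\alpha_2$ and $\beta_1=\beta_2$; these cancel in the combination of the six terms, which one sees either from the holomorphy of $J$ in the shifts, invoking the maximum modulus principle on small annuli exactly as in the opening of the proof of Proposition \ref{mollifiedfourth}, or directly after the linearisation \eqref{int}. Because there is no mollifier, there are no sums over $m_i,n_j$, no appeal to Lemma \ref{EM} or Lemma \ref{logsave}, and no eighth derivative in auxiliary variables, so the computation is shorter—this is precisely the simplification alluded to. I would expect the bookkeeping of the six swapped terms and the verification that the symmetrisation collapses them into the clean factor $(u_1-u_2)^2$ to be the main, though entirely routine, obstacle.
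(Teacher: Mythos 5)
Your proposal is correct and is essentially the argument the paper has in mind: the authors omit the proof of Proposition \ref{shiftedfourth} precisely because it is the specialisation of the proof of Proposition \ref{mollifiedfourth} to the trivial mollifier, which is what you carry out, including the correct divisor-sum identity producing the $1/\zeta(2)$, the six-term swap structure, and the repeated use of \eqref{int} yielding the $(u_1-u_2)^2(\log T)^4/2$ factor. No gaps.
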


\section{Evaluate $\mathcal{T}_{k,\ell}$}\label{sectionTkl}

We recall from \eqref{Z'tformula} that
\begin{align*}
Z^{(k)}(t)&=i^ke^{i\theta(t)}(\log T)^kR_k\Big(\frac{1}{\log T}\frac{d}{d\alpha}\Big)\zeta(\tfrac{1}{2}+\alpha+it)\Big|_{\alpha=0}\\
&\qquad\qquad\qquad+O\bigg(\sum_{j=0}^{k}(\log T)^{j-1}|\zeta^{k-j}(\tfrac12+it)|\bigg),\nonumber
\end{align*}
and the fact that $Z^{(\ell)}(t)=(-1)^\ell Z^{(\ell)}(-t)$. By Proposition \ref{mollifiedfourth} and Cauchy-Schwarz's inequality we hence obtain that
\begin{equation}\label{generalboundTkl}
\mathcal{T}_{k,\ell}\ll T(\log T)^{2(k+\ell)},
\end{equation}
and, in particular,
\begin{align*}
\mathcal{T}_{0,2}&=(\log T)^{4}R_2\Big(\frac{1}{\log T}\frac{d}{d\alpha_2}\Big)R_2\Big(\frac{1}{\log T}\frac{d}{d\beta_2}\Big)I(0,\alpha_2,0,\beta_2)\bigg|_{\alpha_2=\beta_2=0}+O_\varepsilon\big(T(\log T)^{3+\varepsilon}\big).
\end{align*}

Since $I(\alpha_1,\alpha_2,\beta_1,\beta_2)$ is holomorphic with respect to $\alpha_i, \beta_j$ small, the derivatives in the above expression can be obtained as integrals of radii $\asymp (\log T)^{-1}$ around the points $\alpha_i=\beta_j=0$, using Cauchy's integral formula.  Since the error term holds uniformly on these contours, the error term that holds for $I(\alpha_1,\alpha_2,\beta_1,\beta_2)$ also holds for its derivatives.

Note that
\begin{equation*}
\label{eq:Qop}
R\Big(\frac{1}{\log T}\frac{d}{d\alpha}\Big)X^{\alpha}=R\Big(\frac{\log X}{\log T}\Big)X^{\alpha}
\end{equation*}
for any polynomial $R$. Hence from Proposition \ref{mollifiedfourth} we get
 \begin{align}\label{asympS2}
\mathcal{T}_{0,2}&=\frac{T(\log T)^4}{2\vartheta^4}\frac{d^8}{\prod_{1\leq j\leq 4}dx_jdz_j}\int_0^1\int_0^1\int_0^1\int_0^1 \big(1+\vartheta\sum x_j\big)\big(1+\vartheta\sum z_j\big)\nonumber\\
&\quad\times\Big(u_1-u_2+\vartheta\big(-x_1-x_2+z_1+z_2+u_1\sum x_j-u_2\sum z_j\big)\Big)\nonumber\\
&\quad\times\Big(u_1-u_2+\vartheta\big(-x_3-x_4+z_3+z_4+u_1\sum x_j-u_2\sum z_j\big)\Big)\nonumber\\
&\quad\times R_2\Big(\vartheta(z_1+z_2)-u_2\big(1+\vartheta\sum z_j\big)\nonumber\\
&\qquad\qquad\qquad\qquad-u_3\big(u_1-u_2+\vartheta(-x_1-x_2+z_1+z_2+u_1\sum x_j-u_2\sum z_j)\big)\Big)\nonumber\\
&\quad\times R_2\Big(\vartheta(z_3+z_4)-u_2\big(1+\vartheta\sum z_j\big)\\
&\qquad\qquad\qquad\qquad-u_4\big(u_1-u_2+\vartheta(-x_3-x_4+z_3+z_4+u_1\sum x_j-u_2\sum z_j)\big)\Big)\nonumber\\
&\quad\times\iiiint_{\substack{t_1+t_3,t_2+t_4\leq 1\\t_1+t_2,t_3+t_4\leq 1}} P(1-t_1-t_3+x_1+z_1)P(1-t_2-t_4+x_2+z_2)\nonumber\\
&\quad\times P(1-t_1-t_2+x_3+z_3)P(1-t_3-t_4+x_4+z_4)dt_1dt_2dt_3dt_4du_1du_2du_3du_4\bigg|_{\underline{x}=\underline{z}=\underline{0}}\nonumber\\
&\quad+O_\varepsilon\big(T(\log T)^{3+\varepsilon}\big).\nonumber
 \end{align}

\section{Deduction of Theorem \ref{thm1} and Theorem \ref{thm2}}\label{proofthm12}

Theorem \ref{thm2} is a direct consequence of \eqref{keyinequality+}, \eqref{keyinequality-}, \eqref{asympS1} and \eqref{generalboundTkl}.

For Theorem \ref{thm1} we choose $P(x)=x^2$, and obtain from \eqref{asympS1} and \eqref{asympS2} that
\[
\mathcal{S}_{0,2}=-\Big(\frac{1}{4}+\frac{1}{9\vartheta}\Big)T(\log T)^2+O\big(T(\log T)\big)
\]
and
\[
\mathcal{T}_{0,2}=\Big(\frac{52\vartheta}{1215}+\frac{491}{5040}+\frac{563}{6300\vartheta}+\frac{659}{16200\vartheta^2}+\frac{8}{945\vartheta^3}+\frac{1}{1512\vartheta^4}\Big)T(\log T)^4+O_\varepsilon\big(T(\log T)^{3+\varepsilon}\big).
\]
The theorem now follows from \eqref{keyinequality-} and the choice $\vartheta=1/8-\varepsilon$.

\section{Deduction of Theorem \ref{thmHardy} and Corollary \ref{corHardy}}\label{proofthm3}

We recall from \eqref{Z'tformula} that
\begin{align*}
Z^{(k)}(t)&=i^ke^{i\theta(t)}(\log T)^kR_k\Big(\frac{1}{\log T}\frac{d}{d\alpha}\Big)\zeta(\tfrac{1}{2}+\alpha+it)\Big|_{\alpha=0}\\
&\qquad\qquad\qquad+O\bigg(\sum_{j=0}^{k}(\log T)^{j-1}|\zeta^{k-j}(\tfrac12+it)|\bigg).\nonumber
\end{align*}
Using the fact that $Z^{(k)}(t)=(-1)^kZ^{(k)}(-t)$ we hence obtain that
\begin{align*}
&\int_T^{2T}Z^{(k)}(t)Z^{(\ell)}(t)Z^{(m)}(t)Z^{(n)}(t)dt=(-1)^{m+n}i^{k+\ell+m+n}(\log T)^{k+\ell+m+n}\\
&\quad \times R_k\Big(\frac{1}{\log T}\frac{d}{d\alpha_1}\Big)R_\ell\Big(\frac{1}{\log T}\frac{d}{d\alpha_2}\Big)R_m\Big(\frac{1}{\log T}\frac{d}{d\beta_1}\Big)R_n\Big(\frac{1}{\log T}\frac{d}{d\beta_2}\Big)J(\alpha_1,\alpha_2,\beta_1,\beta_2)\bigg|_{\underline{\alpha}=\underline{\beta}=\underline{0}}\\
&\quad+O\big(T(\log T)^{k+\ell+m+n+3}\big).
\end{align*}
As argued in Section \ref{sectionTkl}, it follows from Proposition \ref{shiftedfourth} that
\begin{align*}
&\text{HARDY}(k,\ell,m,n)=(-1)^{m+n}i^{k+\ell+m+n}3\int_0^1\int_0^1\int_0^1\int_0^1(u_1-u_2)^2R_k\big((u_1-u_2)u_3-u_1\big)\\
&\qquad \times R_\ell\big((u_2-u_1)u_3-u_2\big)R_m\big((u_1-u_2)u_4-u_1\big)R_n\big((u_2-u_1)u_4-u_2\big)du_1du_2du_3du_4,
\end{align*}
which proves Theorem \ref{thmHardy}.

The above expression is equal to
\begin{align*}
&(-1)^{m+n}i^{k+\ell+m+n}3\int_0^1\int_0^1\int_0^1\int_0^1(u_1-u_2)^2\Big(\frac12+(u_1-u_2)u_3-u_1\Big)^k\\
&\ \times \Big(\frac12+(u_2-u_1)u_3-u_2\Big)^\ell \Big(\frac12+(u_1-u_2)u_4-u_1\Big)^m\Big(\frac12+(u_2-u_1)u_4-u_2\Big)^ndu_1du_2du_3du_4\\
&\ =i^{k+\ell+m+n}3\int_0^1\int_0^1\int_{-1/2}^{1/2}\int_{-1/2}^{1/2}(u_1-u_2)^2\big(u_1(u_3-1)-u_2u_3\big)^k\big(u_2(u_3-1)-u_1u_3\big)^\ell\\
&\ \times \big(u_2u_4-u_1(u_4-1)\big)^m\big(u_1u_4-u_2(u_4-1)\big)^n du_1du_2du_3du_4,
\end{align*}
by some change of variables. Expanding the integrand in powers of $u_1$ and $u_2$ we see that in the case $k+\ell+m+n$ is odd, the integrand is an odd function with respect to either $u_1$ or $u_2$. It follows that $\text{HARDY}(k,\ell,m,n)=0$, which proves Corollary \ref{corHardy}.\\


\noindent\textbf{Acknowledgements.}\ \ We would like to thank Micah Milinovich for some helpful discussions and for drawing our attention to Figures 1 and 2, and to the anonymous referee for drawing our attention to some related works.


\begin{thebibliography}{9}
\bibitem{B}\label{B}
S. Baluyot, {\it On the zeros of Riemann's zeta-function on the critical line}, J. Number Theory \textbf{165} (2016), 203--269.
\bibitem{BBLR}\label{BBLR}
S. Bettin, H. M. Bui, X. Li, M. Radziwi\l\l, {\it A quadratic divisor problem and moments of the Riemann zeta-function}, J. Eur. Math. Soc. \textbf{22} (2020), 3953--3980.
\bibitem{B1}\label{B1}
H. M. Bui, {\it Critical zeros of the Riemann zeta-function}, arXiv:1410.2433
\bibitem{BCY}\label{BCY}
H. M. Bui, J. B. Conrey, M. P. Young, \textit{More than $41\%$ of the zeros of the zeta function are on the critical line}, Acta Arith. \textbf{150} (2011), 35--64.
\bibitem{BM}\label{BM}
H. M. Bui, M. B. Milinovich, {\it Gaps between zeros of the Riemann zeta-function}, Q. J. Math. \textbf{69} (2018), 403--423.
\bibitem{C}\label{C}
J. B. Conrey, \textit{More than two fifths of the zeros of the Riemann zeta function are on the critical line}, J. Reine Angew. Math. \textbf{399} (1989), 1--26.
\bibitem{CS}\label{CS}
J. B. Conrey, N. C. Snaith, {\it Applications of the $L$-functions ratios conjectures}, Proc. Lond. Math. Soc. \textbf{94} (2007), 594--646.
\bibitem{FR}\label{FR}
D. W. Farmer, R. C. Rhoades, {\it Differentiation evens out zero spacings}, Trans. Amer. Math. Soc. \textbf{357} (2005), 3789--3811.
\bibitem{GI}\label{GI}
S. M. Gonek, A. Ivi\'c, {\it On the distribution of positive and negative values of Hardy's $Z$-function}
J. Number Theory \textbf{174} (2017), 189--201.
\bibitem{GH}\label{GH}
J. Gunns, C. P. Hughes, {\it The effect of repeated differentiation on $L$-functions},
J. Number Theory \textbf{194} (2019), 30--43.
\bibitem{H}\label{H}
R. R. Hall, \textit{The behaviour of the Riemann zeta-function on the critical line},
Mathematika \textbf{46} (1999), 281--313.
\bibitem{H1}\label{H1}
R. R. Hall, {\it Some four dimensional definite integrals arising from zeta-function theory},
Comput. Methods Funct. Theory \textbf{12} (2012), 565--583.
\bibitem{K}\label{K}
H. Ki, {\it The Riemann $\Xi$-function under repeated differentiation}, J. Number Theory \textbf{120} (2006), 120--131.
\end{thebibliography}
\end{document}